\newtheorem{proposition}{Proposition}
\newproof{proof}{Proof}
\newdefinition{remark}{Remark}
\newcommand{\jw}[1]{{\leavevmode\color{black}Junjie: #1}}
\definecolor{ao}{rgb}{0.59, 0.0, 0.09}
\newcommand{\mn}[1]{{\leavevmode\color{black}#1}}
\begin{document}

\begin{frontmatter}

\title{A structure-preserving finite element framework for the Vlasov--Maxwell system}

\author[1]{Katharina Kormann}
\ead{k.kormann@rub.de}
\author[2]{Murtazo Nazarov}
\ead{murtazo.nazarov@uu.se}
\author[2]{Junjie Wen\corref{cor1}}
\ead{junjie.wen@it.uu.se}

\cortext[cor1]{Corresponding author}
\address[1]{Department of Mathematics, Ruhr-Universität Bochum, Universitätsstraße 150, D-44801 Bochum, Germany}
\address[2]{Division of Scientific Computing, Department of Information Technology, Uppsala University, Uppsala 751 05 Sweden.}

\begin{abstract}
We present a stabilized, structure-preserving finite element framework for solving the Vlasov--Maxwell equations. The method uses a tensor product of continuous polynomial spaces for the spatial and velocity domains, respectively, to discretize the Vlasov equation, combined with curl- and divergence-conforming N\'ed\'elec and Raviart-Thomas elements for Maxwell's equations on Cartesian grids. A novel, robust, consistent, and high-order accurate residual-based artificial viscosity method is introduced for stabilizing the Vlasov equations. The proposed method is tested on the 1D2V and 2D2V reduced Vlasov--Maxwell system, achieving optimal convergence orders for all polynomial spaces considered in this study. Several challenging benchmarks are solved to validate the effectiveness of the proposed method.
\end{abstract}



\begin{keyword}
Vlasov--Maxwell; stabilized finite element; structure preserving; high-order method; high-dimensional
\end{keyword}

\end{frontmatter}

\section{Introduction}
This paper focuses on the numerical approximation of solutions to the Vlasov--Maxwell system. This system models the dynamics of ionized gas consisting of charged particles, or plasma, under the influence of electromagnetic fields. Numerous fields in science and engineering rely on accurate approximations of the Vlasov--Maxwell system. These include modeling plasmas in stars, the solar wind, and magnetospheres. Another application that is more oriented towards engineering is fusion energy. The solutions of the Vlasov--Maxwell system play a crucial role in controlling the confinement in fusion reactors such as tokamaks and stellarators.

The Vlasov--Maxwell system consists of a first-order nonlinear hyperbolic equation for the distribution function of plasma and Maxwell's equations for the self-consistent electric and magnetic fields of the plasma. Analytical solutions to these equations are generally unavailable. Therefore, numerical approximations of these solutions are necessary to study the physical phenomena they describe. Simulating such a system using numerical schemes is a challenging task. On the one hand, numerical schemes often face difficulties such as high computational costs and numerical instabilities. On the other hand, the numerical solutions are expected to maintain the important physical properties of the system, such as conservation (for instance, mass, energy, and charge conservation), stability (the numerical scheme must remain stable over long-time simulations), and divergence-conforming properties. Specifically, the numerical solution must satisfy Gauss' law, ensuring $\GRAD_{\textcolor{black}{\bx}} \SCAL \bB = 0$ and $\GRAD_{\textcolor{black}{\bx}} \SCAL \bE = \rho/\epsilon_0$, where $\bB$ is the magnetic field, $\bE$ is the electric field, $\epsilon_0$ is the vacuum permittivity, and $\rho$ is the charge density, at every point in space and at every time level, either strongly or weakly.

The particle-in-cell (PIC) method is popularly used for numerical simulation of plasma physics, including the Vlasov--Maxwell equations, due to its low computational cost when applied to high-dimensional problems and its ease of implementation. This method represents the particle distribution using so-called macroparticles and is coupled with appropriate Maxwell solvers for describing the electromagnetic fields. Many robust schemes for Maxwell's equations are considered in this field, aiming to preserve the essential properties of the Vlasov--Maxwell equations, see, e.g., \cite{10.1063/1.4935904,Qin:2015hta,Kraus_Kormann_Morrison_Sonnendrücker_2017, MR4165619,Bailo_Carrillo_Hu_2024} and references therein. While PIC methods are widely used for plasma simulations, they have some limitations. For instance, due to their construction, PIC methods produce numerical noise, which reduces accuracy in capturing small-scale features of the solution. 

Due to the above-mentioned limitations of the PIC method, grid-based methods are becoming an active research direction for simulating the Vlasov--Maxwell system. Galerkin approximations, as grid-based methods, of hyperbolic systems are gaining popularity lately. The advantages of this approach include the ability to handle complex geometries and the ease of achieving higher-order accuracy. We refer the reader to \cite{MR3267101,Juno_2018,Munch_2021} and references therein for recent work on Discontinuous Galerkin (DG) methods for plasma simulations. However, handling repeated degrees of freedom at the edges of elements, designing robust high-order stabilization techniques across edges, addressing the severe time step restriction for higher-order polynomials as pointed out in \cite{Weber_2022}, and ensuring stable divergence-conforming polynomial spaces present significant challenges for DG schemes, particularly in high-dimensional problems such as the Vlasov--Maxwell system.

The finite element method (FEM), a class of Galerkin methods, offers notable flexibility in handling complex geometries and supports high-order accuracy through enrichment of the polynomial approximation space. Moreover, FEM allows the construction of divergence-conforming function spaces, which is particularly advantageous for approximating Maxwell’s equations. However, generating high-dimensional meshes and corresponding function spaces remains a significant challenge for both FEM and DG methods.

One promising approach to address this issue is using a tensor product of two lower-dimensional spaces, which facilitates the extension of numerical schemes to high-dimensional problems, as explored in \citep{Munch_2021} and \citep{LOVELAND2022101831}. In this project, we aim to leverage a tensor-product of two finite elements of the spatial and the velocity domain, respectively, to solve the Vlasov–Maxwell system in higher dimensions, focusing initially on at least the 2D2V case.

One of the main drawbacks of the finite element method (FEM) for solving the Vlasov equation is its sensibility to numerical instabilities. To address this, additional stabilization mechanisms are necessary to ensure the robustness of the numerical schemes. At the time of writing, only a limited number of studies have explored this direction for high-dimensional problems. For instance, in \cite{Asadzadeh_2018}, the authors present a posteriori error analysis of a Galerkin–Least-Squares (GLS) stabilized FEM scheme for the 1D1V Vlasov equation. GLS-based stabilization presents substantial challenges in achieving high-order, robust, and efficient schemes in high-dimensional settings.

Another stabilization strategy is viscous regularization, such as {\em residual-based} artificial viscosity, which introduces diffusion terms into the advection equations. In our earlier work \citep{wen2025anisotropicnonlinearstabilizationfinite}, we successfully applied this approach to the 1D1V Vlasov–Poisson system, attaining stable, high-order approximations. Commonly, the residual-based artificial viscosity techniques rely on computing residuals via an $L^2$-projection, as done in \citep{wen2025anisotropicnonlinearstabilizationfinite, MR4208958, MR3998292}. However, this projection becomes computationally expensive in high-dimensional problems.

In this work, we propose a new framework for the viscous regularization of the Vlasov–Maxwell system. The aim is to stabilize the Vlasov equation while simultaneously preserving Gauss' laws. Our approach provides an efficient and novel method for computing residuals and applying artificial viscosity in high-dimensional settings such as the Vlasov equation. It is designed to integrate seamlessly with structure-preserving, curl- and divergence-conforming finite element spaces for solving Maxwell’s equations. Unlike GLS stabilization, our method employs a nonlinear anisotropic shock-capturing technique that introduces a consistent, nonlinear viscous regularization. Key advantages include compatibility with high-order explicit time integration and robustness, conservativeness, and stability—regardless of the underlying polynomial degree.

This manuscript is organized as follows: All the equations in the system of Vlasov--Maxwell, together with the structure of Maxwell's equations, are introduced in Section \ref{sec:prelim}. In Section \ref{sec:visc:reg}, we propose a viscous regularization method at the PDE level, and then in Section~\ref{sec:FEVM}, we present the anisotropic stabilization method used in this paper. In the same section, we outline a structure-preserving finite element framework for Vlasov--Maxwell equations. The numerical results of the experiments are presented in Section \ref{sec:exp}.

\section{Preliminaries}\label{sec:prelim}
In this section, we introduce the governing equations of Vlasov--Maxwell. We also propose the viscous regularization for the Vlasov--Maxwell equation and define the notations for the finite element methods.
\subsection{Governing equations} \label{Sec:governing_equations}
For a particle species $s$ with charge $q_s$ and mass $m_s$, we denote its distribution function in the phase space by $f_s({\pmb{x}} , {\pmb{v}} , t)$. Let the spatial and velocity space be denoted by $\Omega_{{\pmb{x}} }$ and $\Omega_{{\pmb{v}} }$, respectively, $\Omega_{{\pmb{x}} } \subseteq \mathbb{R}^{d_{\pmb{x}}}$ and $\Omega_{{\pmb{v}} } \subseteq \mathbb{R}^{d_{\pmb{v}}}$, where $d_{\pmb{x}}$ and $d_{\pmb{v}}$ represent the dimensions of the two spaces. The dimensions are $d_{\pmb{x}}=d_{\pmb{v}}=3$ but reduced models, like $d_{\pmb{x}}=1,d_{\pmb{v}}=2$, can be considered under certain symmetry assumptions. In addition, the velocity space is unbounded in general but the values of the distribution function decay fast as $\|{\pmb{v}}\|_2 \rightarrow \infty$. Thus the computational domain is chosen such that the value of the distribution functions is neglectable outside and artificial boundary conditions are added. \textcolor{black}{It is common to close the velocity domain by periodic boundaries, even though being unphysical the effect of the boundary conditions is neglectable for large enought boundaries and periodic boundary conditions ensure that the system is closed. Also in configuration space, we assume boundary conditions that ensure a closed system (\ie boundary integrals vanish on integration by parts), in particular we consider periodic boundary conditions, unless stated otherwise.}

Let us define the phase space as $\Omega := \Omega_{{\pmb{x}} }\otimes\Omega_{{\pmb{v}} }$. The Vlasov equation is then given by
\begin{equation}\label{eq:vm}
\begin{aligned}
  \frac{\p  f_s }{\p  t}+{\pmb{v}} \SCAL\nabla_{{\pmb{x}} }f_s +\frac{q_s}{m_s}(\bE +{\pmb{v}} \CROSS\bB )\SCAL\nabla_{{\pmb{v}} }f_s  &\, = 0, 
  \quad ({\pmb{x}} ,{\pmb{v}} ,t)\in\Omega\CROSS\mathbb{R}^+, \\
  f_s ({\pmb{x}} ,{\pmb{v}} ,0) &\, =  f_{s,0}({\pmb{x}} ,{\pmb{v}} ),
  \quad ({\pmb{x}} ,{\pmb{v}} )\in\Omega,
\end{aligned}
\end{equation}
where $f_{s,0}({\pmb{x}} ,{\pmb{v}} )$ is a given initial condition and the self-consistent part of the electric field $\bE $ and magnetic field ${\bB }$ are described by the following Amp\`ere's law
\begin{equation}\label{eq:AP}
  \frac{1}{c^2}\frac{\p \bE }{\p  t}=\nabla_{{\pmb{x}} }\CROSS\bB -\mu_0\bJ,
\end{equation}
and Faraday's law
\begin{equation}\label{eq:FR}
  \frac{\p \bB }{\p  t}=-\nabla_{{\pmb{x}} }\CROSS\bE,
\end{equation}
where $c = \frac{1}{\sqrt{\epsilon_0\mu_0}}$ is the speed of light and $\epsilon_0$ and $\mu_0$ the vacuum permittivity and vacuum permeability, respectively. 
In addition, the electric and magnetic fields should satisfy the constraints
\begin{equation}\label{eq:GL}
\begin{aligned}
  \nabla_{{\pmb{x}} }\SCAL\bE &=\frac{\rho}{\epsilon_0},\\
  \nabla_{{\pmb{x}} }\SCAL\bB &=0,
\end{aligned}
\end{equation}
which are commonly called Gauss' law for electricity and magnetism. The charge density $\rho$ and current density $\bJ $ are derived from the distribution function $f_s$ as follows
\begin{equation}\label{eq:denst}
  \rho =  \sum_s q_s \int_{\Omega_{{\pmb{v}} }} f_s \ {\rm d}{\pmb{v}} ,\qquad\bJ  =  \sum_s q_s \int_{\Omega_{{\pmb{v}} }}{\pmb{v}} f_s \ {\rm d}{\pmb{v}} .
\end{equation}

Maxwell's equation can also be expressed in terms of the scalar potential $\Phi$ and the vector potential $\mathbf{A}$. They are related to the fields by
\begin{equation}
  \begin{aligned}
\bE &= - \nabla_{{\pmb{x}}} \Phi - \frac{\partial \mathbf{A}}{\partial t},\\
\bB &= \nabla_{{\pmb{x}}} \CROSS \mathbf{A}.
\end{aligned}\notag
\end{equation}
The scalar potential $\Phi$ can be determined by solving the following Poisson equation
\begin{equation}
    -\nabla_{{\pmb{x}} }^2\Phi = \frac{\rho}{\epsilon_0}.\label{eq:poisson}
\end{equation}

The system of Maxwell's equations has a complex geometric structure itself. Computing the divergence of Faraday's law \eqref{eq:FR}, we get the following conclusion
\begin{equation}
    \nabla_{{\pmb{x}} }\SCAL\p _t\bB =0,\notag
\end{equation}
which implies that the divergence-free property of $\bB $ holds over time if it is satisfied at $t=0$. Similarly, we deduce that Gauss' law \eqref{eq:GL} for electricity holds if the following continuity equation for charge conservation holds
\begin{equation}\label{eq:ctcharge}
    \p _t\rho+\nabla_{{\pmb{x}} }\SCAL\bJ =0.
\end{equation}
The above equation \eqref{eq:ctcharge} is a compatibility condition for Maxwell's equations, and it is obtained by computing the integral of \eqref{eq:vm} in velocity space. 

Taking the divergence of the Amp\`ere's equation in \eqref{eq:AP} and by noting that $\GRAD_{{\pmb{x}}} \SCAL (\GRAD_{{\pmb{x}}} \times \bB)=0$, and using \eqref{eq:ctcharge} we obtain the Gauss' law for electricity in a different equivalent form:
\begin{equation}\label{eq:GL_E_dt}
\p_t\left(\GRAD_{{\pmb{x}}} \SCAL \bE - \frac{\rho}{\epsilon_0}\right) = 0,
\end{equation}
which implies that the law remains satisfied at all times, provided it holds for the initial data.

The Vlasov--Maxwell system has the following conservation properties. 
\begin{proposition}[Conservation]
The following quantities are conservative for the solution of Vlasov--Maxwell equations:
\begin{enumerate}
\item Mass:
$\frac{\rm d}{{\rm d}t}\sum_s\int_{\Omega} m_sf_s \ {\rm d}{\pmb{x}} {\rm d}{\pmb{v}} =0.$
\item Momentum:
$\frac{\rm d}{{\rm d}t}(\sum_s\int_{\Omega} m_sf_s {\pmb{v}} \ {\rm d}{\pmb{x}} {\rm d}{\pmb{v}} +\epsilon_0\int_{\Omega_{{\pmb{x}} }}\bE \CROSS\bB \ {\rm d}{\pmb{x}} )=0.$
\item Energy:
$\frac{\rm d}{{\rm d}t}\left(\sum_s\int_{\Omega} m_sf_s {\pmb{v}} ^2\ {\rm d}{\pmb{x}} {\rm d}{\pmb{v}} +\int_{\Omega_{{\pmb{x}} }}(\epsilon_0\bE ^2+\mu_0^{-1}\bB ^2)\ {\rm d}{\pmb{x}} \right)=0.$
\item Squared $L^2$-norm:
$\frac{\rm d}{{\rm d}t}\int_{\Omega}\vert f_s \vert^2\ {\rm d}{\pmb{x}} {\rm d}{\pmb{v}} =0.$
\end{enumerate}
\end{proposition}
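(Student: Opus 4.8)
The plan is to prove each of the four identities by testing the Vlasov equation~\eqref{eq:vm} against a suitable weight $w({\pmb{x}},{\pmb{v}})$, integrating over the phase space $\Omega$, and---for momentum and energy---adding the time evolution of the electromagnetic momentum/energy obtained from Amp\`ere's~\eqref{eq:AP} and Faraday's~\eqref{eq:FR} laws. The structural fact used throughout is that the phase-space transport field is divergence free: $\nabla_{{\pmb{x}}}\SCAL{\pmb{v}}=0$ since ${\pmb{v}}$ is independent of ${\pmb{x}}$, and $\nabla_{{\pmb{v}}}\SCAL(\bE+{\pmb{v}}\CROSS\bB)=0$ since $\bE$ is independent of ${\pmb{v}}$ and $\nabla_{{\pmb{v}}}\SCAL({\pmb{v}}\CROSS\bB)=0$. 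Hence \eqref{eq:vm} can be written in conservative form $\p_t f_s+\nabla_{{\pmb{x}}}\SCAL({\pmb{v}} f_s)+\tfrac{q_s}{m_s}\nabla_{{\pmb{v}}}\SCAL\big((\bE+{\pmb{v}}\CROSS\bB)f_s\big)=0$, and after multiplication by $w$ and integration over $\Omega$, every term that can be absorbed into a phase-space divergence reduces to a boundary integral that vanishes under the assumed (periodic) boundary conditions. Choosing $w=m_s$ makes all terms such divergences, giving mass conservation. Choosing $w=f_s$ and using $f_s\p_t f_s=\tfrac12\p_t (f_s^2)$ and $f_s\,\nabla f_s=\tfrac12\nabla(f_s^2)$, the transport and force terms become divergences of $\tfrac12 f_s^2{\pmb{v}}$ and $\tfrac12 f_s^2(\bE+{\pmb{v}}\CROSS\bB)$ (using divergence-freeness once more), so $\tfrac{{\rm d}}{{\rm d}t}\int_{\Omega}|f_s|^2\,{\rm d}{\pmb{x}}{\rm d}{\pmb{v}}=0$.

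For momentum, take $w=m_s{\pmb{v}}$. The transport term is $\nabla_{{\pmb{x}}}\SCAL\big(m_s({\pmb{v}}\otimes{\pmb{v}})f_s\big)$ and drops; integrating the force term by parts in ${\pmb{v}}$ and using $\nabla_{{\pmb{v}}}\SCAL(\bE+{\pmb{v}}\CROSS\bB)=0$ leaves $-q_s\int_{\Omega_{{\pmb{v}}}}f_s(\bE+{\pmb{v}}\CROSS\bB)\,{\rm d}{\pmb{v}}$, so summing over species and using~\eqref{eq:denst} yields $\tfrac{{\rm d}}{{\rm d}t}\sum_s\int_{\Omega}m_sf_s{\pmb{v}}\,{\rm d}{\pmb{x}}{\rm d}{\pmb{v}}=\int_{\Omega_{{\pmb{x}}}}(\rho\bE+\bJ\CROSS\bB)\,{\rm d}{\pmb{x}}$, the total Lorentz force. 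For the field momentum, differentiate $\epsilon_0\bE\CROSS\bB$ in time and substitute~\eqref{eq:AP}--\eqref{eq:FR}; then using $c^2=(\epsilon_0\mu_0)^{-1}$, the identity $(\nabla_{{\pmb{x}}}\CROSS\mathbf{F})\CROSS\mathbf{F}=(\mathbf{F}\SCAL\nabla_{{\pmb{x}}})\mathbf{F}-\tfrac12\nabla_{{\pmb{x}}}|\mathbf{F}|^2$ for $\mathbf{F}\in\{\bE,\bB\}$, integration by parts on the periodic domain, and Gauss' laws~\eqref{eq:GL} to replace $\nabla_{{\pmb{x}}}\SCAL\bE$ by $\rho/\epsilon_0$ and $\nabla_{{\pmb{x}}}\SCAL\bB$ by $0$, one obtains $\epsilon_0\tfrac{{\rm d}}{{\rm d}t}\int_{\Omega_{{\pmb{x}}}}\bE\CROSS\bB\,{\rm d}{\pmb{x}}=-\int_{\Omega_{{\pmb{x}}}}(\rho\bE+\bJ\CROSS\bB)\,{\rm d}{\pmb{x}}$, which cancels the kinetic contribution.

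For energy, take $w=m_s{\pmb{v}}^2$. The transport term drops as a $\nabla_{{\pmb{x}}}$-divergence; integrating the force term by parts in ${\pmb{v}}$ produces $\nabla_{{\pmb{v}}}({\pmb{v}}^2)\SCAL(\bE+{\pmb{v}}\CROSS\bB)=2{\pmb{v}}\SCAL\bE$, the magnetic part vanishing since ${\pmb{v}}\SCAL({\pmb{v}}\CROSS\bB)=0$, whence $\tfrac{{\rm d}}{{\rm d}t}\sum_s\int_{\Omega}m_sf_s{\pmb{v}}^2\,{\rm d}{\pmb{x}}{\rm d}{\pmb{v}}=2\int_{\Omega_{{\pmb{x}}}}\bJ\SCAL\bE\,{\rm d}{\pmb{x}}$. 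Differentiating the field energy and inserting~\eqref{eq:AP}--\eqref{eq:FR}, the two curl contributions combine through $\nabla_{{\pmb{x}}}\SCAL(\bE\CROSS\bB)=\bB\SCAL(\nabla_{{\pmb{x}}}\CROSS\bE)-\bE\SCAL(\nabla_{{\pmb{x}}}\CROSS\bB)$ into a pure divergence that integrates to zero, leaving $\tfrac{{\rm d}}{{\rm d}t}\int_{\Omega_{{\pmb{x}}}}(\epsilon_0\bE^2+\mu_0^{-1}\bB^2)\,{\rm d}{\pmb{x}}=-2\int_{\Omega_{{\pmb{x}}}}\bJ\SCAL\bE\,{\rm d}{\pmb{x}}$, which cancels the kinetic term; note that, in contrast with the momentum balance, this argument does not use Gauss' laws.

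The routine content is the chain of integrations by parts together with the bookkeeping of the constants $\epsilon_0$, $\mu_0$ and $c$; the one genuinely delicate step is the field-momentum computation, where the symmetric (Maxwell-stress) structure must be extracted via the vector identity above and both relations in~\eqref{eq:GL} are indispensable---the cancellation closes precisely because the solution satisfies the divergence constraints, not merely Amp\`ere's and Faraday's laws.
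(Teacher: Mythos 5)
Your proposal is correct and follows the same route as the paper's (very terse) proof: multiply the Vlasov equation by $1$, ${\pmb{v}}$, ${\pmb{v}}^2$, and $f_s$, integrate over $\Omega$ using the divergence-free phase-space transport field and the closed (periodic) boundary conditions, and for momentum and energy pair the kinetic balance with the field momentum/energy evolution from Amp\`ere's and Faraday's laws. You simply supply the details the paper omits, including the correct observation that the Maxwell-stress cancellation in the momentum balance additionally requires both Gauss constraints.
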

\begin{proof}
Mass conservation is trivial, which is obtained by integrating the equation \eqref{eq:vm}  in $\Omega$. The momentum, energy, and squared $L^2$-norm conservation are obtained by multiplying \eqref{eq:vm} by ${\pmb{v}}$, ${\pmb{v}}^2$, and $f$ respectively and integrating in $\Omega$.
\end{proof}

\subsection{Viscous regularization of the Vlasov--Maxwell system}
\label{sec:visc:reg}
By defining the vector field $\pmb{\bbetaa} := \big(\bbetaa_{{\pmb{x}} }, \bbetaa_{{\pmb{v}} }\big)^\top := \big({\pmb{v}} , \frac{q_s}{m_s}(\bE  + {\pmb{v}}  \CROSS \bB ) \big)^\top$, the Vlasov equation \eqref{eq:vm} can be expressed as the following nonlinear advection equation for $({\pmb{x}} ,{\pmb{v}} ,t)\in\Omega\CROSS\mathbb{R}^+$:
\begin{equation}\label{eq:vw:advection}
\begin{aligned}
\p _t f_s + \bbetaa_{{\pmb{x}} } \SCAL \nabla_{{\pmb{x}} } f_s + \bbetaa_{{\pmb{v}} } \SCAL \nabla_{{\pmb{v}} } f_s & = 0,\\
f_s ({\pmb{x}} ,{\pmb{v}} ,0) &=  f_{s,0}({\pmb{x}} ,{\pmb{v}} ).
\end{aligned}
\end{equation}
We aim to approximate the above equation using continuous finite element methods. However, it is well-known that finite element approximations of advection problems are unstable. Therefore, additional stabilization terms must be introduced, either to the fully discrete approximation or at the PDE level. Viscous regularization is a well-studied stabilization technique commonly used in advection-dominated problems; see, for instance, \cite{Guer2014, Dao2022a}. The viscous regularized Vlasov equation reads: we look for viscosity solution $f^{\epsilon}_s({\pmb{x}}, {\pmb{v}}, t)$ from:
\begin{equation}\label{eq:vw:advection:visc}
\begin{aligned}
\p _t f^{\epsilon}_s + \bbetaa_{{\pmb{x}} } \SCAL \nabla_{{\pmb{x}} } f^{\epsilon}_s &+ \bbetaa_{{\pmb{v}} } \SCAL \nabla_{{\pmb{v}} } f^{\epsilon}_s \\
&-
\GRAD_{{\pmb{x}}}\cdot\big( \mathsf{A_{{\pmb{x}}}} \GRAD_{{\pmb{x}}} f^\epsilon_s \big)
-
\nabla_{{\pmb{v}}}\cdot \big( \mathsf{A_{{\pmb{v}}}} \GRAD_{{\pmb{v}}} f^\epsilon_s \big)
= 0, \\
&\, \hspace{1.54in}f^{\epsilon}_s ({\pmb{x}} ,{\pmb{v}} ,0) =  f_{s,0}({\pmb{x}} ,{\pmb{v}} ),
\end{aligned}
\end{equation}
where $\mathsf{A}_{{\pmb{x}}}$ and $\mathsf{A}_{{\pmb{v}}}$ are vanishing, nonnegative, and sufficiently smooth viscosity matrices that are yet to be defined. 

Since additional terms are added to the Vlasov equation, the continuity equation \eqref{eq:ctcharge} should take into account the added terms. When taking the integral of \eqref{eq:vw:advection:visc} in $\Omega_{\pmb{v}}$, we obtain the following viscous continuity equation:
\begin{equation}\label{eq:ctcharge:visc}
    \p _t\rho^\epsilon+\GRAD_{{\pmb{x}}}\SCAL
    \bJ^\epsilon -\int_{\Omega_{{\pmb{v}}}}\GRAD_{{\pmb{x}}} \SCAL \big( \mathsf{A}_{{\pmb{x}}}\GRAD_{{\pmb{x}}}f^\epsilon_s \big)\ {\rm d}{\pmb{v}}=0.
\end{equation}
Since the continuity equation includes an additional viscosity term, Gauss' law for electricity will be violated. Once the divergence of the Amp\`ere equation is calculated, the additional viscous term in \eqref{eq:ctcharge:visc} will be the remaining term. Therefore, we propose solving the following regularized Maxwell's equations:
\begin{equation}\label{eq:AP:visc}
\begin{aligned}
 \frac{1}{c^2} \frac{\p\bE^\epsilon }{\p  t}&=\nabla_{{\pmb{x}} }\CROSS\bB^\epsilon -\mu_0\bJ^\epsilon 
  + \mu_0
  \int_{\Omega_{{\pmb{v}}}}\mathsf{A}_{{\pmb{x}}}\GRAD_{{\pmb{x}}}f^\epsilon_s \, \ud{\pmb{v}},\\
  \frac{\p \bB^\epsilon }{\p  t}&=-\nabla_{{\pmb{x}} }\CROSS\bE^\epsilon.
\end{aligned}
\end{equation}
The solution of the last equation \eqref{eq:AP:visc} satisfies the Gauss' law \eqref{eq:GL}. 
To simplify the discussion, we propose using a modified charge density:
\begin{equation}\label{eq:J_h:visc}
\widetilde{\bJ^\epsilon} := \bJ^\epsilon - \int_{\Omega_{{\pmb{v}}}}\mathsf{A}_{{\pmb{x}}}\GRAD_{{\pmb{x}}}f^\epsilon_s \, \ud{\pmb{v}}.
\end{equation}
which will be used throughout the rest of the paper.

For simplicity, we omit the superscript $"^\epsilon"$ from the regularised solutions. Below, we propose a finite element approximation of the regularized system.  

\subsection{Finite element approximations}
We will consider an approximation between the domains $\Omega_{{\pmb{x}}}$ and $\Omega_{{\pmb{v}}}$ using tensor product. Let us define the mesh and function space in each domain separately.
Two types of elements are commonly used for FEM: $\polP_k$ elements, which employ polynomials on triangles and tetrahedrons, allowing flexible meshes for complex geometries; and $\polQ_k$ elements, which use tensor-product polynomials on quadrilaterals or hexahedrons. Compared to $\polP_k$ elements, $\polQ_k$ elements are generally more accurate.  In this work, we primarily use $\polQ_k$ elements, but we emphasize that there is no barrier to switching to $\polP_k$ elements if desired.

Let $\calT_{\pmb{x}}$ be a triangulation of the domain $\Omega_{\pmb{x}}$, consisting of a finite union of cuboids 
$K_{\pmb{x}} := \prod^{d_{\pmb{x}}}_{l=1}[x_{l}^-, x_{l}^+]$, where $x_{l}^-, x_{l}^+\in \mathbb{R}$ and $x_{l}^- < x_{l}^+$, such that $\overline{\Omega}_{\pmb{x}} = \bigcup_{K_{\pmb{x}}\in \calT_{{\pmb{x}}}} \overline{K}_{\pmb{x}}$, where $\overline{\Omega}_{\pmb{x}}$ and $\overline{K}_{\pmb{x}}$ denote the closure of $\Omega_{\pmb{x}}$ and $K_{\pmb{x}}$ respectively. Let us denote the mesh size of the cuboid in each direction of the $\pmb{x}$ spaces as $\Delta{x_l}:= x_{l}^+ - x_{l}^- $, $l=1,\ldots,d_{\pmb{x}}$. We construct a finite element space in $\mathbb{R}^{d_{\pmb{x}}}$ using the following $\polQ_k$ polynomial space:
\begin{equation}
	\polQ_k({\pmb{x}})={\rm span}\left\{\prod^{d_{\pmb{x}}}_{l=1}x_i^{\alpha_l}=\pmb{x}^{\pmb{\alpha}}:0\le\alpha_l\le k\ {\rm for}\ l=1,\ldots,d_{\pmb{x}}\right\},\notag
\end{equation}
where $\pmb{\alpha} = (\alpha_1, \ldots, \alpha_{d_{\pmb{x}}}) \in \mathbb{N}^{d_{\pmb{x}}}$ is the multi-index. For the mesh $\calT_{\pmb{x}}$, we define the following continuous approximation space:
\begin{equation*}
	\polV_{\pmb{x}} := \{w(\pmb{x})\in \calC^0(\overline\Omega_{\pmb{x}}):\, w(\bT_K(\widehat{\pmb{x}}))\in \polQ_{k}(\widehat{K}),\forall K\in \calT_{\pmb{x}} \},
\end{equation*}
where $\bT_K(\widehat{\pmb{x}})$ denotes an affine mapping from the reference cuboid $\widehat{K}$ to the physical cell $K\in \calT_{{\pmb{x}}}$.

We adopt the definitions above and define $\calT_{\pmb{v}}$ and $\polV_{{\pmb{v}}}$ for $\Omega_{{\pmb{v}}}$.  Let $\calI_{\pmb{x}}:=\{ 1:N_{\pmb{x}} \}$ and $\calI_{\pmb{v}}:=\{ 1: N_{\pmb{v}} \}$ be the set of global nodes of the spaces $\polV_{{\pmb{x}}}$ and $\polV_{{\pmb{v}}}$ respectively, where $N_{\pmb{x}}={\rm dim}(\polV_{{\pmb{x}}})$ and $N_{\pmb{v}}={\rm dim}(\polV_{{\pmb{v}}})$. We denote the basis set of $\polV_{{\pmb{x}}}$ by $\{\phi_i\}_{i\in \calI_{{\pmb{x}}}}$, and the basis set of $\polV_{{\pmb{v}}}$ by $\{\varphi_i\}_{i\in \calI_{{\pmb{v}}}}$. The Vlasov equation is solved in the mesh $\calT:= \calT_{{\pmb{x}}} \otimes \calT_{{\pmb{v}} }$ and the space $\polV:= \polV_{{\pmb{x}} }\otimes\polV_{{\pmb{v}}}$. 
We also denote by $\calI:=\{ 1:N \}$ the set of global nodes of $\polV$, and we know that $N:=N_{\pmb{x}} N_{\pmb{v}}$. Let $\{\psi_i\}_{i\in \calI}$ represent the collection of all Lagrange basis functions in $\polV$. We further define $S_x(j)$, $S_v(j)$, and $S(l)$ as the set of all nodes within the support of $\phi_i$, $\varphi_j$, and $\psi_l$ for all $i\in\calI_\bx$, $j\in\calI_\bv$, and $l\in\calI$, respectively.

Since $\polV:=\polV_{{\pmb{x}}}\otimes\polV_{{\pmb{v}}}$, for any $\psi_l\in\polV$, there exists a pair $(\phi_i,\varphi_j)\in(\polV_{{\pmb{x}}},\polV_{{\pmb{v}}})$ such that $\psi_l=\phi_i\varphi_j$. We let $\psi_{(i-1)N_{\pmb{v}}+j}=\phi_i\varphi_j$ for $i\in\calI_{\pmb{x}}$, $j\in\calI_{\pmb{v}}$, and we define the following subscript for the degrees of freedom of $\polV$: $\psi_{i,j}=\psi_{(i-1)N_{\pmb{v}}+j}$ for clarity of notation.
\begin{remark}
When $d_{\pmb{x}} = d_{\pmb{v}} = 1$, $\calT$ is always a grid mesh, as shown in Figure \ref{fig:mesh} (a). However, for higher-dimensional settings, more options become available, see Figure \ref{fig:mesh} (b) and (c). For instance, the $\polP_k$ element of the FEM is known for its robustness in handling complex geometries, and one can use the $\polP_k$ elements for the spatial domain, while employing $\polQ_k$ elements in the velocity space to achieve high accuracy. In addition, different polynomial degrees can also be utilized for $\polV_{{\pmb{x}}}$ and $\polV_{{\pmb{v}}}$ if necessary.
\end{remark}
\begin{figure}[htbp]
  \centering
  \begin{subfigure}[b]{.33\textwidth}
    \includegraphics[width=\linewidth]{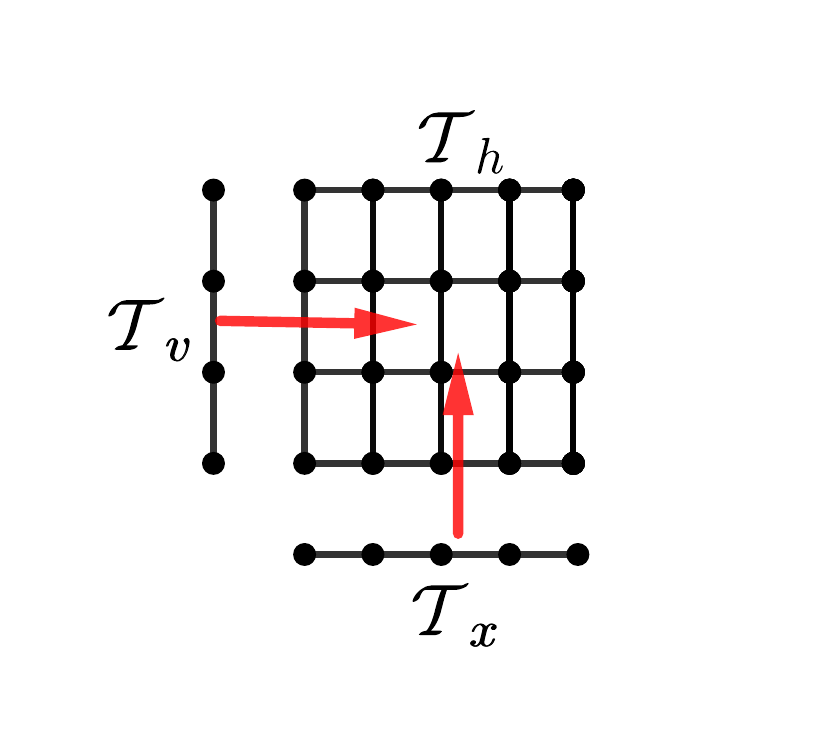}
    \caption{${\rm 1D1V}$}
  \end{subfigure}\hfill
  \centering
  \begin{subfigure}[b]{.33\textwidth}
    \includegraphics[width=\linewidth]{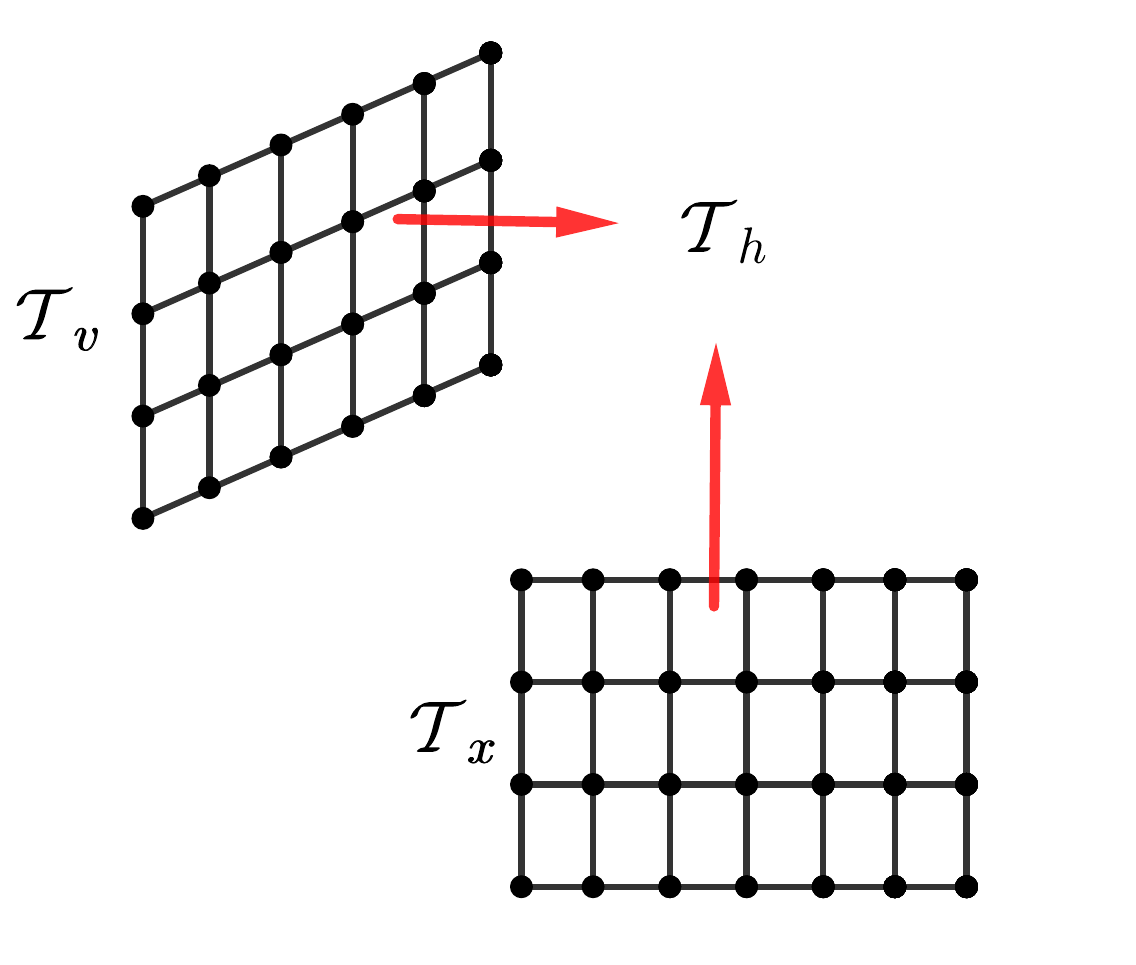}
    \caption{${\rm 2D2V},\polQ_k\otimes\polQ_k$}
  \end{subfigure}\hfill
  \centering
  \begin{subfigure}[b]{.33\textwidth}
    \includegraphics[width=\linewidth]{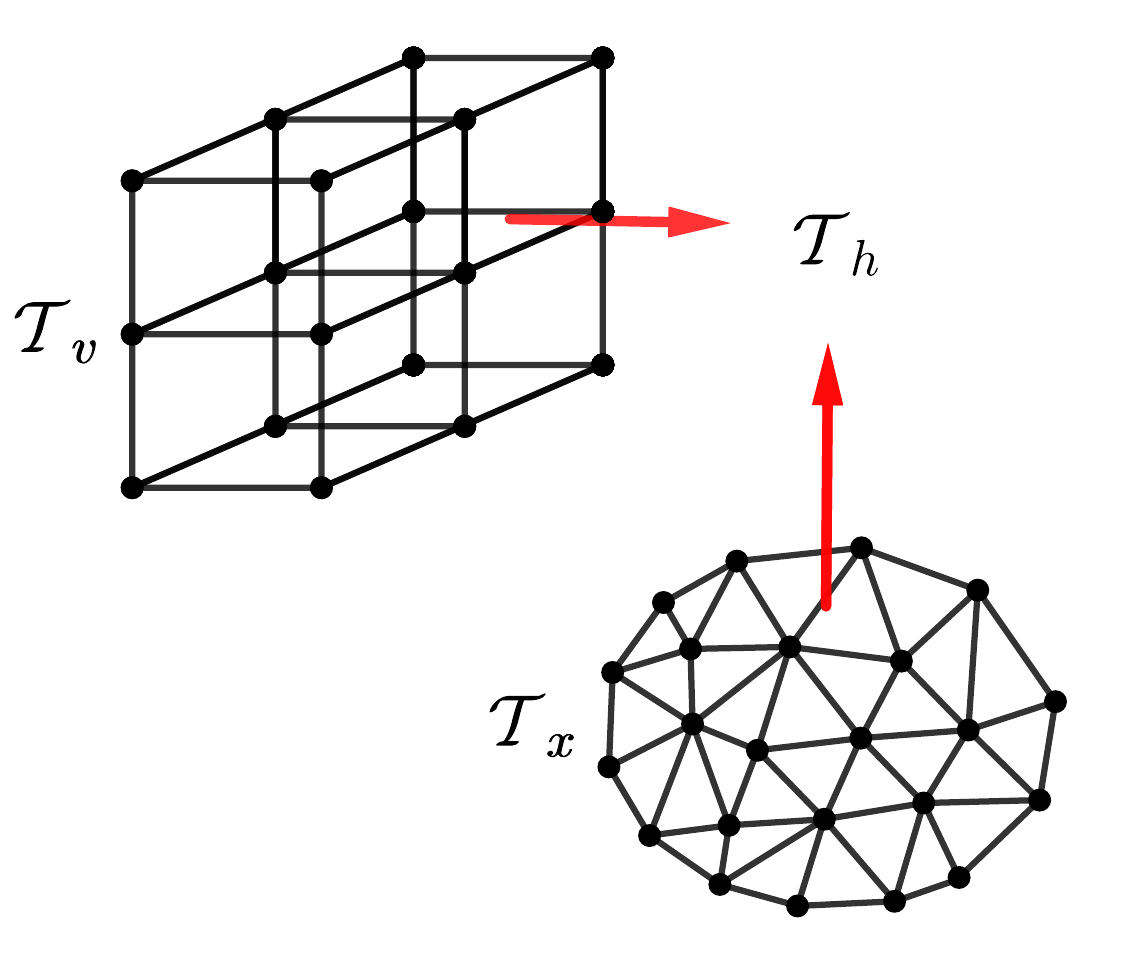}
    \caption{${\rm 2D3V},\polP_k\otimes\polQ_k$}
  \end{subfigure}\hfill
  \caption{Illustration of the tensor product meshes.}
  \label{fig:mesh}
\end{figure}

\section{Finite element framework for Vlasov--Maxwell equations}\label{sec:FEVM}
In this section, we introduce a framework for Vlasov–Maxwell equations using finite element methods. We use the finite element exterior calculus in this framework to preserve the structural properties of Maxwell’s equations. We also introduce the computation of charge and current density, which couple Maxwell’s equations with the distribution function. 

\subsection{First-order artificial viscosity}
The finite element approximation for \eqref{eq:vw:advection:visc} is given by multiplying it with test functions and integrating by parts, i.e., find $f_h({\pmb{x}} , {\pmb{v}} , t)\in \calC^1([0,T];\polV)$ such that
\begin{equation}\label{eq:GFEM:visc}
  (\p _t f_h+\pmb{\bbetaa}_h\SCAL \nabla f_h,\psi_i)+(\pmb{\mathsf{A}}\GRAD f_h, \GRAD \psi_i)
  =0,\qquad\forall i\in\calI,
\end{equation}
where $\nabla:=(\nabla_{{\pmb{x}}},\nabla_{{\pmb{v}}})^\top$, $\pmb{\bbetaa}_h:=({\pmb{v}},\bE_h+{\pmb{v}}\times\bB_h)^\top$, and $\pmb{\mathsf{A}}:= \textrm{diag}(\mathsf{A}_{\pmb{x}}, \mathsf{A}_{\pmb{v}})$, with $\mathsf{A}_{\pmb{x}}$ and $\mathsf{A}_{\pmb{v}}$ being mesh-dependent artificial viscosity coefficient matrices, and $\pmb{E}_h$ and $\bB_h$ are the FE approximations of $\bE$ and $\bB$, respectively, defined in the section below.

\textcolor{black}{
\begin{proposition}
The vector field $\pmb{\bbetaa}_h$ is divergence-free.
\end{proposition}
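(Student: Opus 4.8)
The plan is to compute the phase-space divergence $\nabla \SCAL \pmb{\bbetaa}_h$ directly, splitting it along the two blocks of the phase space, so that $\nabla \SCAL \pmb{\bbetaa}_h = \nabla_{{\pmb{x}}} \SCAL {\pmb{v}} + \nabla_{{\pmb{v}}} \SCAL ( \bE_h + {\pmb{v}} \CROSS \bB_h )$, with $\nabla := (\nabla_{{\pmb{x}}}, \nabla_{{\pmb{v}}})^\top$ (any constant prefactor such as $q_s/m_s$ in the velocity block is immaterial). The first term vanishes since ${\pmb{v}}$ does not depend on ${\pmb{x}}$, i.e.\ $\partial_{x_l} v_l = 0$ for every $l$. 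For the second term, the key observation is that $\bE_h$ and $\bB_h$ are finite element functions living on $\Omega_{{\pmb{x}}}$ only and are therefore independent of ${\pmb{v}}$; hence $\nabla_{{\pmb{v}}} \SCAL \bE_h = 0$, and the claim reduces to showing $\nabla_{{\pmb{v}}} \SCAL ({\pmb{v}} \CROSS \bB_h) = 0$.

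For this last identity I would argue componentwise: writing $({\pmb{v}} \CROSS \bB_h)_i = \sum_{j,k} \varepsilon_{ijk}\, v_j\, (\bB_h)_k$ with $\varepsilon_{ijk}$ the Levi-Civita symbol, and using once more that $(\bB_h)_k$ does not depend on ${\pmb{v}}$, one gets $\sum_i \partial_{v_i}({\pmb{v}} \CROSS \bB_h)_i = \sum_{i,j,k} \varepsilon_{ijk}\, \delta_{ij}\, (\bB_h)_k = \sum_{i,k} \varepsilon_{iik}\, (\bB_h)_k = 0$ by the antisymmetry of $\varepsilon$. Equivalently, the product rule $\nabla_{{\pmb{v}}} \SCAL ({\pmb{v}} \CROSS \bB_h) = \bB_h \SCAL (\nabla_{{\pmb{v}}} \CROSS {\pmb{v}}) - {\pmb{v}} \SCAL (\nabla_{{\pmb{v}}} \CROSS \bB_h)$ has both terms zero, since $\nabla_{{\pmb{v}}} \CROSS {\pmb{v}} = \pmb{0}$ and $\bB_h$ is ${\pmb{v}}$-independent. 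Adding the two contributions yields $\nabla \SCAL \pmb{\bbetaa}_h = 0$.

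I do not expect a genuine obstacle here: the claim is essentially the chain rule once the independence of the phase-space variables has been made explicit. The points worth being careful about are (i) stating clearly that $\bE_h$ and $\bB_h$ are functions of ${\pmb{x}}$ alone — this is exactly what annihilates the ${\pmb{v}}$-derivatives — and (ii) the reduced-dimensional settings ($d_{{\pmb{x}}} \neq d_{{\pmb{v}}}$, e.g.\ 1D2V), where ${\pmb{v}} \CROSS \bB_h$ is read in the usual reduced sense; the same componentwise computation still gives zero. I would also note that the identical argument applies verbatim to the continuous transport field $\pmb{\bbetaa}$, since nothing in the proof uses the discretization — this is the classical fact that the Vlasov characteristic flow is divergence-free in phase space, which is exactly what underlies the squared-$L^2$-norm conservation recorded in the conservation proposition.
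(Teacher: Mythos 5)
Your proposal is correct and follows essentially the same route as the paper: split the phase-space divergence into the $\pmb{x}$- and $\pmb{v}$-blocks, use the independence of $\pmb{v}$ from $\pmb{x}$ and of $\bE_h,\bB_h$ from $\pmb{v}$, and then kill $\nabla_{\pmb{v}}\SCAL(\pmb{v}\CROSS\bB_h)$. Your Levi-Civita computation for that last term is in fact more explicit than the paper's one-line appeal to $\pmb{v}\CROSS\bB_h\perp\pmb{v}$ (the operative fact being that the $i$th component of the cross product does not involve $v_i$), but this is a presentational difference, not a different argument.
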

\begin{proof}
It is trivial to note that $\nabla_\bx \cdot \pmb{v} + \nabla_\bv \cdot \bE_h\equiv0$. For the remaining term $\nabla_\bv \cdot (\pmb{v} \times \bB_h)$, this follows, since $\bv \times \bB_h  \perp \bv$.
\end{proof}}

In time, we use high-order explicit Strong-Stability-Preserving Runge-Kutta (SSP-RK) methods. For the details on SSP-RK methods, see \cite{gottlieb2005high} and the references therein. These methods are constructed as a convex combination of the forward Euler method, therefore it is sufficient to present our method only for the forward Euler method. 

We start by discretizing the time interval as $0=t_0 < t_1 < \ldots < t_{N_T}$, and denote the local time step as $\tau_n := t_{n+1} - t_n$. The FE solution $f_h^n \approx f_h(t_n)$ can be written as $f_h^n = \sum_{i\in\calI}f_i^n\psi_i$, and let us discretize \eqref{eq:GFEM:visc} in time using the forward Euler scheme, then the following fully discrete system is obtained:
\begin{equation}\label{eq:FO}
  \sum_{j\in\calI} m_{ij}\frac{f^{n+1}_{j} - f^n_{j}}{\tau_n} 
  + 
  \sum_{j\in\calI} c^n_{ij} f^n_{j} 
  +
  \sum_{j\in\calI} d_{ij}^{n} f^n_{j} = 0,\qquad\forall i\in\calI,
\end{equation} 
where 
$
m_{ij} := (\psi_j, \psi_i)
$
is the mass matrix,
$c^n_{ij}:= ( \pmb{\bbetaa}_h^n\SCAL\GRAD \psi_j, \psi_i)$ is the advection matrix, and 
$
d_{ij}^n := (\pmb{\mathsf{A}}^n \GRAD\psi_j, \GRAD \psi_i)
$ is the diffusion matrix for all $\psi_i,\psi_j\in\polV$, $\pmb{\bbetaa}_h^n=\pmb{\bbetaa}_h(t_n)$ and $\pmb{\mathsf{A}}^n=\pmb{\mathsf{A}}(t_n)$. The time step size in \eqref{eq:FO} is under the control of the following CFL condition:
\begin{equation}\label{eq:CFL}
  \tau_n = \lambda\frac{h_{\textrm{min}}}{k\Vert\pmb{\bbetaa}_h^n\Vert_{L^\infty(\Omega)}},
\end{equation}
where 
$h_{\textrm{min}}:= \min_{i=1,\ldots,d_{\pmb{x}}, j=1,\ldots,d_{\pmb{v}}}(\Delta x_i, \Delta v_j)$ is the smallest mesh size in the mesh $\calT$, and $\lambda$ is the CFL number.

The artificial matrix $\pmb{\mathsf{A}}^n$ can be constructed as a first-order function which is proportional to the mesh size such as in \cite{wen2025anisotropicnonlinearstabilizationfinite}: 
$\mathsf{A}^{\textrm{L},n}_{\pmb{x}}\in[\polV]^{d_{\pmb{x}}}:= \textrm{diag}(\e^{\textrm{L},n}_{x_1}, \ldots, \e^{\textrm{L},n}_{x_{d_{\pmb{x}}}})$ and $\mathsf{A}^{\textrm{L},n}_{{\pmb{v}}}\in[\polV]^{d_{\pmb{v}}}:= \textrm{diag}(\e^{\textrm{L},n}_{v_1}, \ldots, \e^{\textrm{L},n}_{v_{d_{\pmb{v}}}})$, where the nodal values of $\e^{\textrm{L},n}_{x_l}$ and $\e^{\textrm{L},n}_{v_l}$ are given by
\begin{equation}\label{eq:AV:L}
  \begin{aligned}
    \big(\e^{\textrm{L},n}_{x_l}\big)_i&= \frac12 \frac{\Delta x_l}{k} \|\beta_{x_l}\|_{L^\infty(S(i))},\\
    \big(\e^{\textrm{L},n}_{v_l}\big)_i&= \frac12 \frac{\Delta v_l}{k} \|\beta_{v_l}\|_{L^\infty(S(i))},
  \end{aligned}
\end{equation}
for every node $i=1,\ldots,N$ and $l = 1, \dots, d_\bz$, $\bz\in\{{\pmb{x}},{\pmb{v}}\}$. Then, we define $\pmb{\mathsf{A}}^{\textrm{L},n}:= \textrm{diag}(\mathsf{A}^{\textrm{L},n}_{{\pmb{x}} }, \mathsf{A}^{\textrm{L},n}_{{\pmb{v}}})$, and the following result shows that the method is $L^2$-stable with this choice of artificial viscosity.
\begin{proposition}\label{prop:SE}
There is a uniform constant $C>0$, independent of the mesh size, such that the scheme \eqref{eq:FO} is stable in the sense that
\[
\|f_h\|^2_{L^{\infty}( (0,T); L^2(\Omega) )} + (1 - \lambda\textcolor{black}{C}) \sum_{n=0}^{N_T} 2 \tau_n 
\|(\pmb{\mathsf{A}}^{{\rm L},n})^{\frac12} \GRAD f^n_h\|^2 \le \|f_0\|^2,
\]
where $\lambda$ is the CFL number.
\end{proposition}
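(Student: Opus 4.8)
The plan is a discrete energy estimate for the forward Euler update \eqref{eq:FO}; the only genuinely delicate point is that, with an explicit scheme, the time increment $f^{n+1}_h-f^n_h$ has to be absorbed into the artificial dissipation, which is exactly what the CFL condition \eqref{eq:CFL} and the scaling of $\pmb{\mathsf{A}}^{{\rm L},n}$ in \eqref{eq:AV:L} are tailored for. First I would test \eqref{eq:FO} against the coefficient vector $(f^n_i)_{i\in\calI}$, i.e.\ form the combination $\sum_{i\in\calI}f^n_i\times(\text{equation }i)$. The advection contribution becomes $(\pmb{\bbetaa}_h^n\SCAL\GRAD f^n_h,f^n_h)$, which vanishes: $\pmb{\bbetaa}_h^n$ is divergence-free (Proposition above), hence $(\pmb{\bbetaa}_h^n\SCAL\GRAD f^n_h,f^n_h)=\tfrac12\int_\Omega\pmb{\bbetaa}_h^n\SCAL\GRAD(f^n_h)^2=0$ after integration by parts, the boundary integrals vanishing for the closed system. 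The diffusion contribution equals $\|(\pmb{\mathsf{A}}^{{\rm L},n})^{1/2}\GRAD f^n_h\|^2\ge 0$ by nonnegativity of the coefficients \eqref{eq:AV:L}, and the mass term is treated with the polarization identity $(f^{n+1}_h-f^n_h,f^n_h)=\tfrac12\bigl(\|f^{n+1}_h\|^2-\|f^n_h\|^2-\|f^{n+1}_h-f^n_h\|^2\bigr)$. Multiplying by $2\tau_n$ gives the exact one-step identity
\[
\|f^{n+1}_h\|^2+2\tau_n\|(\pmb{\mathsf{A}}^{{\rm L},n})^{1/2}\GRAD f^n_h\|^2=\|f^n_h\|^2+\|f^{n+1}_h-f^n_h\|^2 .
\]

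The heart of the argument is the bound $\|f^{n+1}_h-f^n_h\|^2\le 2\lambda C\,\tau_n\|(\pmb{\mathsf{A}}^{{\rm L},n})^{1/2}\GRAD f^n_h\|^2$. Put $g_h:=f^{n+1}_h-f^n_h\in\polV$; from \eqref{eq:FO}, $(g_h,\psi_i)=-\tau_n\bigl((\pmb{\bbetaa}_h^n\SCAL\GRAD f^n_h,\psi_i)+(\pmb{\mathsf{A}}^{{\rm L},n}\GRAD f^n_h,\GRAD\psi_i)\bigr)$ for every $i$, so testing against $g_h$ gives $\|g_h\|^2=-\tau_n\bigl((\pmb{\bbetaa}_h^n\SCAL\GRAD f^n_h,g_h)+(\pmb{\mathsf{A}}^{{\rm L},n}\GRAD f^n_h,\GRAD g_h)\bigr)$. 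I would bound the transport term direction by direction, using the anisotropy of $\pmb{\mathsf{A}}^{{\rm L},n}$: for a cell $K$ and a node $i\in K$, $(\e^{{\rm L},n}_{x_l})_i=\tfrac{\Delta x_l}{2k}\|\beta_{x_l}\|_{L^\infty(S(i))}\ge\tfrac{\Delta x_l}{2k}\|\beta_{x_l}\|_{L^\infty(K)}$, so $\e^{{\rm L},n}_{x_l}\gtrsim\tfrac{\Delta x_l}{2k}|\beta_{x_l}|$ pointwise on $K$ (with a constant depending on $k$ when $k\ge 2$). The Cauchy--Schwarz split $|\beta_{x_l}\,\partial_{x_l}f^n_h\,g_h|\le(\e^{{\rm L},n}_{x_l})^{1/2}|\partial_{x_l}f^n_h|\cdot\bigl(|\beta_{x_l}|^2/\e^{{\rm L},n}_{x_l}\bigr)^{1/2}|g_h|$ together with $|\beta_{x_l}|^2/\e^{{\rm L},n}_{x_l}\le 2k\|\pmb{\bbetaa}_h^n\|_{L^\infty(\Omega)}/h_{\rm min}$ then gives, after summing the $d_{\pmb{x}}+d_{\pmb{v}}$ directions, $|(\pmb{\bbetaa}_h^n\SCAL\GRAD f^n_h,g_h)|\le C\bigl(k\|\pmb{\bbetaa}_h^n\|_{L^\infty(\Omega)}/h_{\rm min}\bigr)^{1/2}\|(\pmb{\mathsf{A}}^{{\rm L},n})^{1/2}\GRAD f^n_h\|\,\|g_h\|$. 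The viscosity term is bounded similarly: Cauchy--Schwarz, $\|\e^{{\rm L},n}_{x_l}\|_{L^\infty(\Omega)}\le\tfrac{\Delta x_l}{2k}\|\pmb{\bbetaa}_h^n\|_{L^\infty(\Omega)}$ and a standard inverse inequality $\|\GRAD g_h\|\le C\,h_{\rm min}^{-1}\|g_h\|$ (for which quasi-uniformity is used) give a bound of the same type. Dividing by $\|g_h\|$, squaring, and inserting the CFL relation $\tau_n=\lambda h_{\rm min}/(k\|\pmb{\bbetaa}_h^n\|_{L^\infty(\Omega)})$ — which turns the factor $\tau_n^2\|\pmb{\bbetaa}_h^n\|_{L^\infty(\Omega)}/h_{\rm min}$ into $\lambda\tau_n$ up to a mesh-size-independent constant — yields $\|g_h\|^2\le 2\lambda C\,\tau_n\|(\pmb{\mathsf{A}}^{{\rm L},n})^{1/2}\GRAD f^n_h\|^2$ with $C$ depending only on $k$, $d_{\pmb{x}}$, $d_{\pmb{v}}$ and the mesh regularity.

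Substituting into the one-step identity gives $\|f^{n+1}_h\|^2+2(1-\lambda C)\tau_n\|(\pmb{\mathsf{A}}^{{\rm L},n})^{1/2}\GRAD f^n_h\|^2\le\|f^n_h\|^2$. Summing over $n=0,\dots,N_T$ telescopes the $L^2$ terms and, since for $\lambda C\le 1$ the sequence $(\|f^n_h\|)_n$ is non-increasing, also controls $\|f_h\|_{L^\infty((0,T);L^2(\Omega))}$; with $\|f^0_h\|\le\|f_0\|$ for the discrete initial data, collecting the two pieces gives the claimed estimate.

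The main obstacle is the transport estimate of the second paragraph: absorbing $(\pmb{\bbetaa}_h^n\SCAL\GRAD f^n_h,g_h)$ into the \emph{anisotropic} dissipation $\|(\pmb{\mathsf{A}}^{{\rm L},n})^{1/2}\GRAD f^n_h\|$ forces the direction-by-direction splitting and relies on the precise scaling $\e^{{\rm L},n}\sim\tfrac{\Delta x}{2k}|\beta|$ in \eqref{eq:AV:L}, so that the $h_{\rm min}$ and $k$ implicit in the viscosity coincide with those supplied by the CFL number; a crude isotropic bound $|(\pmb{\bbetaa}_h^n\SCAL\GRAD f^n_h,g_h)|\le\|\pmb{\bbetaa}_h^n\|_{L^\infty(\Omega)}\|\GRAD f^n_h\|\,\|g_h\|$ would not survive division by the dissipation. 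The remaining points are routine: the vanishing of the boundary integrals under the assumed (periodic) boundary conditions, and — for $k\ge 2$ — ensuring that the discrete viscosity entering the diffusion term is genuinely nonnegative (e.g.\ via a lumped/nodal treatment), since the $\polQ_k$ interpolant of the nodal coefficients need not be, which is where the $k$-dependent but mesh-size-independent constants originate.
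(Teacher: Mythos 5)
Your argument is correct and follows the same overall strategy as the paper's proof: test \eqref{eq:FO} with $f^n_h$ to get the one‑step energy identity (your polarization identity is exactly the paper's $(a-b)b=\tfrac12(a^2-b^2-(a-b)^2)$ step), then absorb $\|f^{n+1}_h-f^n_h\|^2$ into the artificial dissipation using the scaling of $\pmb{\mathsf{A}}^{{\rm L},n}$ together with the CFL condition, and telescope; testing the increment equation against $g_h=f^{n+1}_h-f^n_h$ is equivalent to the paper's duality formulation $\|g_h\|=\sup_{w}|(g_h,w)|/\|w\|$. The one place where you genuinely diverge is the absorption of the flux term. The paper bounds $\|\pmb{\bbetaa}_h^n\SCAL\GRAD f^n_h\|$ through the global quantity $\|(\pmb{\mathsf{A}}^{{\rm L},n})^{-1}\|_{L^\infty(\Omega)}^{1/2}$, which yields a constant $C_3$ containing the ratio $\|\pmb{\bbetaa}_h^n\|_{L^\infty(\Omega)}\big/\min_{j\in\calI}\|\pmb{\bbetaa}_h^n\|_{L^\infty(S(j))}$; your direction‑by‑direction pointwise Cauchy--Schwarz keeps only $|\beta_{z_l}|^2/\e^{{\rm L},n}_{z_l}\le 2k|\beta_{z_l}|/\Delta z_l\le 2k\|\pmb{\bbetaa}_h^n\|_{L^\infty(\Omega)}/h_{\min}$ and never divides by a local minimum of the velocity field. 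Since $\bbetaa_{\bx}=\bv$ vanishes near $\bv=0$, the paper's ratio is not obviously uniform, so your variant actually delivers the advertised mesh‑independent constant more cleanly, and it exploits the anisotropy of \eqref{eq:AV:L} in the way it was designed to be used. Your closing caveats --- that the telescoping/$L^\infty$-in-time step uses $\lambda C\le 1$, and that for $k\ge 2$ the $\polQ_k$ interpolant of nonnegative nodal viscosities need not be nonnegative pointwise --- are real points that the paper's proof passes over silently.
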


\begin{proof}
Let us write \eqref{eq:FO} in an equivalent integral form
\begin{equation*}
 \Big(\frac{f^{n+1}_h-f^n_h}{\tau_n}, \, w \Big) +
 \big(\pmb{\bbetaa}_h^n \SCAL \GRAD f^n_h, \, w \big) +
 \big(\pmb{\mathsf{A}}^{\textrm{L},n}\GRAD f^n_h, \, \GRAD w \big) 
 =0, \quad n = 0, 1, \ldots, N_T.
\end{equation*}
By inserting $w=f^n_h$ into the last equality and taking into account the identity
$
(a-b)b = \frac12(a^2 - b^2 - (a-b)^2),
$
as well as 
$
\big(\pmb{\bbetaa}_h^n \SCAL \GRAD f^n_h, \, f^n_h \big) = 0,
$
due to the divergence free property of $\pmb{\bbetaa}^n_h$ and the periodic boundary condition,
we obtain:
\begin{equation}\label{eq:FO:l2}
\frac{1}{2\tau_n} \big( \|f^{n+1}_h\|^2 - \|f^n_h\|^2 \big) 
+ \|(\pmb{\mathsf{A}}^{\textrm{L},n})^{\frac12} \GRAD f^n_h\|^2
=
\frac{\tau_n}{2} \Big\|\frac{f^{n+1}_h - f^n_h}{\tau_n} \Big\|^2.
\end{equation}
\textcolor{black}{
By noting the inverse estimate
$
\|\GRAD w \| \le C_1 k h_{\min}^{-1} \|w\|, 
$
for any $w\in \polV$, where $C_1>0$ is a constant, we have the following estimate for the discrete time-derivative 
 \begin{equation}\label{eq:Dtf}
\begin{aligned}
\Big\|\frac{f^{n+1}_h - f^n_h}{\tau_n} \Big\|
&\, = 
\sup_{w \in \polV \setminus \{0\}} \frac{1}{\|w\|} \Big| \Big( \frac{f^{n+1}_h - f^n_h}{\tau_n},\,  w\Big) \Big| \\
&\, \le 
\sup_{w \in \polV \setminus \{0\}} \frac{1}{\|w\|} 
\left(
\big| ( \pmb{\bbetaa}_h^n \SCAL \GRAD f^n_h,\, w ) \big|
+
\big|(
\pmb{\mathsf{A}}^{\textrm{L},n} \GRAD f^n_h, \, \GRAD w ) 
\big|
\right)
\\
&\, \le 
\sup_{w \in \polV \setminus \{0\}} \frac{1}{\|w\|} 
(
\Vert\pmb{\bbetaa}_h^n \SCAL \GRAD f^n_h\Vert \Vert w\Vert
+
\| \pmb{\mathsf{A}}^{\textrm{L},n} \GRAD f^n_h \| \, \|\GRAD w\|
)
\\
&\, \le 
\sup_{w \in \polV \setminus \{0\}} \frac{1}{\|w\|} 
\left(
\Vert\pmb{\bbetaa}_h^n \SCAL \GRAD f^n_h\Vert \Vert w\Vert
+
\| \pmb{\mathsf{A}}^{\textrm{L},n} \GRAD f^n_h \| \, C_1 k h_{\min}^{-1} \|w\|
\right)
\\
&\, = 
\|\pmb{\bbetaa}_h^n \SCAL \GRAD f^n_h\| + \textcolor{black}{}
C_1 k h_{\min}^{-1} \| \pmb{\mathsf{A}}^{\textrm{L},n} \GRAD f^n_h \| \, 
\\
&\, \le 
\|\pmb{\bbetaa}_h^n \SCAL \GRAD f^n_h\| + \textcolor{black}{}
C_2 k^\frac12 h_{\min}^{-\frac12} \|\pmb{\bbetaa}_h^n \|_{L^\infty(\Omega)}^{\frac12} \| (\pmb{\mathsf{A}}^{\textrm{L},n})^{\frac12} \GRAD f^n_h \|.
\end{aligned} 
\end{equation}
}
The flux term in the last inequality can be estimated as:
\[
\|\pmb{\bbetaa}_h^n \SCAL \GRAD f^n_h\| 
\le
\|\pmb{\bbetaa}_h^n\|_{L^\infty(\Omega)}
\|\GRAD f^n_h\|
\le
\|(\pmb{\mathsf{A}}^{\textrm{L},n})^{-1} \|_{L^\infty(\Omega)}^\frac12
\|\pmb{\bbetaa}_h^n\|_{L^\infty(\Omega)}
\|(\pmb{\mathsf{A}}^{\textrm{L},n})^{\frac12} \GRAD f^n_h\|.
\]
Each components $z\in\{x , v \}$ of the viscosity matrix can be estimated as
\[
(\e^{\textrm{L},n}_{z_l})^{-1} = 
\Big( \sum_{j\in\calI} \big(\e^{\textrm{L},n}_{z_l}\big)_j \psi_j \Big)^{-1}
\le
2 k\, h_{\min}^{-1} \Big( \min_{j\in\calI} \|\pmb{\bbetaa}_h^n\|_{L^\infty(S(j))} \Big)^{-1},
\]
for every $l=1,\ldots,d_z$, where we also used the partition of unity property of $\psi_j$. Thus,
\[
\|\pmb{\bbetaa}_h^n \SCAL \GRAD f^n_h\| 
\le
C_3 k^\frac12\, h_{\min}^{-\frac12} \|\pmb{\bbetaa}_h^n\|_{L^\infty(\Omega)}^\frac12
\|(\textcolor{black}{\pmb{\mathsf{A}}^{\textrm{L},n}})^{\frac12} \GRAD f^n_h\|,
\]
where
\textcolor{black}{
$
C_3 := 
\Big(
2 \|\pmb{\bbetaa}_h^n\|_{L^\infty(\Omega)} 
\big(
\min_{j\in\calI} \|\pmb{\bbetaa}_h^n\|_{L^\infty(S(j))}
\big)^{-1}
\Big)^{\frac12}
$
}. 
Now, we multiply \eqref{eq:FO:l2} by $2\tau_n$ and collect all terms in \eqref{eq:Dtf} and obtain
\begin{equation}\label{eq:case1}
\|f^{n+1}_h\|^2 
+ 2\tau_n
\Big(
1 - C \tau_n k h_{\min}^{-1} \|\pmb{\bbetaa}_h^n\|_{L^\infty(\Omega)}
\Big)
\|(\pmb{\mathsf{A}}^{\textrm{L},n})^{\frac12} \GRAD f^n_h\|^2
\le
\|f^n_h\|^2,
\end{equation}
where $C:=\textcolor{black}{\frac{1}{2}(C_2+C_3)^2}$ is independent of the mesh size, then sum \eqref{eq:case1} over $n=0,1,\ldots, N_T$, and get the desired estimate.
\hfill $\square$
\end{proof}

\subsection{Nonlinear residual-based viscosity}\label{sec:RV}
We have proved that the scheme \eqref{eq:GFEM:visc} with the artificial viscosity coefficients defined in \eqref{eq:AV:L} is stable in the previous section. However, this choice of viscosity coefficients results in only a first-order convergent scheme; as introduced in \cite{wen2025anisotropicnonlinearstabilizationfinite}, it yields upwind schemes. In this section, we present a nonlinear construction of the viscosity coefficients that take into account the finite element residual of the Vlasov--Maxwell equations, following the approach in \cite{MR3011445, Nazarov_Hoffman_2013, MR4754161} and other references cited therein, where similar methods were applied to the compressible Euler and magnetohydrodynamic equations.

In many previous works \textcolor{black}{(see, for example, \cite{wen2025anisotropicnonlinearstabilizationfinite,MR3998292, MR4754161,MR4664977, MR4456197,MR4734518})} we usually seek the residual of equation \eqref{eq:vw:advection} through the following $L^2$-projection: find $ R^n(f_h):=\sum_{j\in\calI} R^n_j\psi_j\in \polV$ such that 
\begin{equation}\label{eq:residual}
(R^n,\psi_i)=\big(\big| D_{\tau} f^n_h+\pmb{\bbetaa}_h^n\SCAL\nabla f_h^n\big|, \psi_i\big),\qquad\forall i\in\calI,
\end{equation}
where $D_{\tau} f^n_h$ is a reasonable approximation of the time-derivative of $f_h$ at time level $t^n$. \textcolor{black}{Additional operations may be applied to \eqref{eq:residual}, such as lumping the mass matrix or adding smoothing terms.} Once the finite element residual is approximated, we construct the following high-order artificial viscosity matrices: $\pmb{\mathsf{A}}^{\textrm{H},n}:= \textrm{diag}(\mathsf{A}^{\textrm{H},n}_{{\pmb{x}}}, \mathsf{A}^{\textrm{H},n}_{{\pmb{v}}})$, $\mathsf{A}^{\textrm{H}, n}_{{\pmb{x}}}\in[\polV]^{d_{\pmb{x}}}:= \textrm{diag}(\e^{\textrm{H},n}_{x_1}, \ldots, \e^{\textrm{H},n}_{x_{d_{\pmb{x}}}})$, and $\mathsf{A}^{\textrm{H},n}_{{\pmb{v}}}\in[\polV]^{d_{\pmb{v}}}:= \textrm{diag}(\e^{\textrm{H},n}_{v_1}, \ldots, \e^{\textrm{H},n}_{v_{d_{\pmb{v}}}})$, where the nodal values of $\e^{\textrm{H},n}_{x_l}$ and $\e^{\textrm{H},n}_{v_l}$ are given by
\begin{equation}\label{eq:AV:L=H}
\begin{aligned}
\big(\e^{\textrm{H},n}_{x_l}\big)_i 
&= 
\min\Big(\big(\e^{\textrm{L},n}_{x_l}\big)_i,\Big(\frac{\Delta x_l}{k}\Big)^{2} \frac{|R_i^n|}{\Lambda_\Omega(f^n_h)}\Big), \\ 
\big(\e^{\textrm{H},n}_{v_l}\big)_i 
&= 
\min\Big(\big(\e^{\textrm{L},n}_{v_l}\big)_i,\Big(\frac{\Delta v_l}{k}\Big)^{2} \frac{|R_i^n|}{\Lambda_\Omega(f^n_h)}\Big),
\end{aligned}
\end{equation}
for every node $i=1,\ldots,N$ and $l = 1, \dots, d_\bz$, $\bz\in\{{\pmb{x}},{\pmb{v}}\}$. Here, $\Lambda_\Omega(f^n_h)$ is the normalization constant at $t = t_n$, which we define below. This constant is used to remove the scale of $f_h$ from the residual. Various normalization functions have been presented in the literature where entropy and residual-based stabilization were used. In this work, we define
\begin{equation}
     \Lambda_Q(w):= \big \| w - \overline{w} \big \|_{L^\infty(Q)},
     \notag
\end{equation}
where $\overline{w} = \frac{1}{|Q|} \int_{Q} w$ is the average of the solution in the domain $Q$. In practice, to avoid division by zero, one can normalize the residual 
$\frac{R^n_i}{\Lambda_\Omega(f^n_h)} \approx R^n_i \frac{\Lambda_\Omega(f^n_h)}{(\Lambda_\Omega(f^n_h))^2  + 10^{-14} \|f^n_h\|^2_{L^{\infty}(\Omega)}}$. 

\textcolor{black}{
\begin{remark}
We use the first-order viscosity as an upper bound of viscosity coefficients to make sure our scheme is not overly diffusive and want to emphasize that we cannot establish a similar stability estimate as in Proposition \ref{prop:SE} for the residual viscosity. The main reason is the lack of a lower bound for the PDE residual. This implies that the residual can theoretically be zero, in which case the scheme essentially reduces to the standard Galerkin finite element method. However, in practice, if the PDE residual is zero or close to zero, it typically indicates that the solution is extremely smooth and therefore does not require additional stabilization.
In order to achieve theoretical completeness, one possible approach is to introduce a higher-order stabilization term that is activated when the residual vanishes. An example of such a mechanism is the edge stabilization method described in \cite{MR3062586}. However, this is beyond the scope of the present manuscript.
\end{remark}}

\subsection{A novel residual-based viscosity}\label{sec:newRV}
\textcolor{black}{
The method introduced in Section \ref{sec:RV} for computing the residual is robust for low-dimentional problems; while it is problematic when applied to high-dimensional cases. First, the backward differentiation formulas (BDFs) are applied for computing $D_\tau f_h^n$, and in our previous works, the experiments show that usually BDF2 is sufficient enough, which is given by ${\rm BDF}2(f_h)^n = (3f_h^n-4f_h^{n-1}+f_h^{n-2})/(2\tau_n)$, when the time-step is uniform, for the fourth-order accurate scheme.
The BDFs require storing the solutions at previous time steps, e.g., $f_h^{n-2}$, $f_h^{n-1}$ and $f_h^{n}$ are needed when using BDF2; the size of $f_h$ increases dramatically with the dimension, resulting in significantly higher storage requirement. 
In addition, projection step \eqref{eq:residual} requires solving a linear system, which can equally expensive as solving the Euler step of \eqref{eq:FO}.}

Due to this limitation, we introduce a novel approach for constructing the residual-based artificial viscosity suitable for high-dimensional problems. Let us start by taking the integral of \eqref{eq:vw:advection:visc} in $\Omega_{\pmb{x}}$ and $\Omega_{\pmb{v}}$ respectively, and obtain
\begin{equation}\label{eq:uv_ux}
  \begin{aligned}
    \partial_t u_v+\nabla_{\pmb{v}}\cdot\mathbf{F}_{\pmb{v}}(u_v)-\nabla_{\pmb{v}}\cdot(\mathsf{A}_{\pmb{v}}\nabla_{\pmb{v}} u_v)=0,\\
    \partial_t u_x+\nabla_{\pmb{x}}\cdot\mathbf{F}_{\pmb{x}}(u_x)-\nabla_{\pmb{x}}\cdot(\mathsf{A}_{\pmb{x}}\nabla_{\pmb{x}} u_x)=0,
  \end{aligned}
\end{equation}
where
\begin{equation}\label{eq:flux}
  \begin{aligned}
    u_v({\pmb{v}},t)=\int_{\Omega_{\pmb{x}}}f\ {\rm d}{\pmb{x}},\qquad
    \mathbf{F}_{\pmb{v}}(u_v)=\int_{\Omega_{\pmb{x}}}\bbetaa_{\pmb{v}} f\ {\rm d}{\pmb{v}},\\
    u_x({\pmb{x}},t)=\int_{\Omega_{\pmb{v}}}f\ {\rm d}{\pmb{v}},\qquad
    \mathbf{F}_{\pmb{x}}(u_x)=\int_{\Omega_{\pmb{v}}}\bbetaa_{\pmb{x}} f\ {\rm d}{\pmb{x}}.
  \end{aligned}
\end{equation}
In addition, we require the viscosity matrices $\mathsf{A}_{\pmb{x}}:=\mathsf{A}_{\pmb{x}}(u_x)$ and $\mathsf{A}_{\pmb{v}}:=\mathsf{A}_{\pmb{v}}(u_v)$ be fully anisotropic. For this purpose, we propose to regularize \eqref{eq:uv_ux} component-wise, where the viscosity matrices are constructed using the following corresponding residuals: find $ R_{x}^n(u_{h,x})\in \polV_{{\pmb{x}}}$ and $ R_{v}^n(u_{h,v})\in \polV_{{\pmb{v}}}$ such that 
\begin{equation}\label{eq:residual:density}
  \begin{aligned}
    (R^n_{x},\phi_i)&=\big(\big| D_{\tau} u^n_{h,x}+\nabla_{\pmb{x}}\cdot\mathbf{F}_{h,{\pmb{x}}}^n\big|, \phi_i\big),\qquad\forall i\in\calI_{\pmb{x}},\\
    (R^n_{v},\varphi_j)&=\big(\big| D_{\tau} u^n_{h,v}+\nabla_{\pmb{v}}\cdot\mathbf{F}_{h,{\pmb{v}}}^n\big|, \varphi_j\big),\qquad\forall j\in\calI_{\pmb{v}},
  \end{aligned}
\end{equation}
where $u^n_{h,x}$ and $u^n_{h,v}$ are finite element approximation of $u^n_{x}(t^n,{\pmb{x}})$ and $u_{v}(t^n,{\pmb{v}})$ respectively, $D_{\tau} u^n_{h,x}$ and $D_{\tau} u^n_{h,v}$ are approximations of $\partial_t u_{h,x}$ and $\partial_t u_{h,v}$ at time level $t^n$, which are computed using BDF2; $u^n_{h,x}$, $u^n_{h,v}$, $\mathbf{F}^n_{h,{\pmb{x}}}$ and $\mathbf{F}^n_{h,{\pmb{v}}}$ are derived from $f_h^n$ and $\pmb{\bbetaa}_h^n$ using the formulation \eqref{eq:flux}. \textcolor{black}{Note that the projection problems in \eqref{eq:residual:density} are low-dimensional and can be computed very efficiently.}

In order to make the viscosity anisotropic, the first-order viscosity is reduced to depend only on the spatial or velocity space by taking the average values over the other direction, i.e.,
\begin{equation}
  \begin{aligned}
  \big(\nu^{\textrm{L},n}_{x_l}\big)_i 
  &= \frac{1}{N_\bv}\sum_{j\in\calI_{\pmb{v}}}\big(\e^{\textrm{L},n}_{x_l}\big)_{i,j},\qquad \forall i\in\calI_{\pmb{x}},\\ 
  \big(\nu^{\textrm{L},n}_{v_l}\big)_i 
  &= \frac{1}{N_\bx}\sum_{j\in\calI_{\pmb{x}}}\big(\e^{\textrm{L},n}_{x_l}\big)_{j,i},\qquad \forall i\in\calI_{\pmb{v}}.
  \end{aligned}\notag
\end{equation}
In addition, since the residual viscosity is derived from the residuals of \eqref{eq:uv_ux}, the normalization function should also be defined in terms of $u^n_{h,x}$ and $u^n_{h,v}$. Following the approach in \cite{Dao2022a,WEN2025114079}, we introduce a parameter that depends on local discontinuities and define the following normalization function:
\begin{equation}
  \begin{aligned}
    \Lambda_{\bz_i}=\left(1-0.5\frac{\max_{j\in S_z(i)}u^n_{z,j}-\min_{j\in S_z(i)}u^n_{z,j}}{\max_{j\in\calI_\bz}u^n_{z,j}-\min_{j\in\calI_\bz}u^n_{z,j}}\right)\big \|u^n_{h,z} - \overline{u^n_{h,z}} \big \|_{L^\infty(\Omega_\bz)},\quad\forall i\in\calI_\bz,\\
  \end{aligned}\notag
\end{equation}
for $\bz \in \{ \bx, \bv\}$, and then we compute the high-order viscosity whose nodal values are given by
\begin{equation}\label{eq:AV:L=H2}
  \begin{aligned}
  \big(\nu^{\textrm{H},n}_{x_l}\big)_i 
  &= 
  \min\Big( \big(\nu^{\textrm{L},n}_{x_l}\big)_i ,\, \Big(\frac{\Delta x_l}{k}\Big)^{2} \frac{\vert R^n_{x,i}\vert}{\Lambda_{\bx_i}}\frac{d_\bx}{d_\bx+d_\bv}\Big),\qquad \forall i\in\calI_{\pmb{x}}, l = 1,\ldots,d_\bx,\\ 
  \big(\nu^{\textrm{H},n}_{v_l}\big)_i 
  &= 
  \min\Big( \big(\nu^{\textrm{L},n}_{v_l}\big)_i ,\, \Big(\frac{\Delta v_l}{k}\Big)^{2} \frac{\vert R^n_{v,i}\vert}{\Lambda_{\bv_i}}\frac{d_\bv}{d_\bx+d_\bv}\Big),\qquad \forall i\in\calI_{\pmb{v}}, l = 1,\ldots,d_\bv. 
  \end{aligned}
\end{equation}
Finally, we define the new viscosity matrices as follows: $\pmb{\mathsf{A}}^{\textrm{H},n}:= \textrm{diag}(\mathsf{A}^{\textrm{H},n}_{\pmb{x}},\mathsf{A}^{\textrm{H},n}_{\pmb{v}})$, 
$\mathsf{A}^{\textrm{H},n}_{\pmb{x}}\in[\polV_{\pmb{x}}]^{d_{\pmb{x}}}:= \textrm{diag}(\nu^{\textrm{H},n}_{x_1}, \ldots, \nu^{\textrm{H},n}_{x_{d_{\pmb{x}}}})$, and $\mathsf{A}^{\textrm{H},n}_{{\pmb{v}}}\in[\polV_{\pmb{v}}]^{d_{\pmb{v}}}:= \textrm{diag}(\nu^{\textrm{H},n}_{v_1}, \ldots, \nu^{\textrm{H},n}_{v_{d_{\pmb{v}}}})$.
\textcolor{black}{
\begin{remark}
  The new formulation \eqref{eq:residual:density} for computing the residual is more efficient than the conventional approach \eqref{eq:residual}, as the computational cost of \eqref{eq:residual:density} scales with $N_\bx + N_\bv$, whereas the cost of \eqref{eq:residual} scales with $N_\bx N_\bv$. This leads to a significantly lower computational expense, especially when both $N_\bx$ and $N_\bv$ are large. The robustness of the new definition of residual viscosity is verified through numerical experiments. For example, Figure~\ref{fig:LD} shows that in the presence of a strong shock, the viscosity is sufficient to stabilize the FE solutions.
\end{remark}}
\begin{remark}
  The artificial viscosity introduces diffusion into the system. To compute the viscosity coefficients, we use the PDE residuals such that the artificial viscosity vanishes in smooth regions. In this work, we construct the viscosity in each direction based on the residuals of \eqref{eq:uv_ux}. However, since diffusion accumulates across all directions, we apply the scaling factors $\frac{d_\bx}{d_\bx + d_\bv}$ and  $\frac{d_\bv}{d_\bx + d_\bv}$  to balance the contribution. This choice ensures consistency with the conventional residual viscosity method. For instance, when $d_\bv = 0$, we have $\frac{d_\bx}{d_\bx + d_\bv} = 1$ and $\frac{d_\bv}{d_\bx + d_\bv} = 0$.
\end{remark}
\begin{proposition}
The forward Euler scheme \eqref{eq:FO} is conservative if we use the artificial viscosity defined in Section \ref{sec:newRV}.
\end{proposition}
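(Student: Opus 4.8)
The term \emph{conservative} here refers to item~1 of the Conservation proposition, namely that the total mass $\sum_s m_s\int_\Omega f_{s,h}^{n}\,\ud{\pmb{x}}\,\ud{\pmb{v}}$ is independent of the time level $n$; since the argument is identical for every species, it suffices to show $\int_\Omega f_h^{n+1}\,\ud{\pmb{x}}\,\ud{\pmb{v}} = \int_\Omega f_h^{n}\,\ud{\pmb{x}}\,\ud{\pmb{v}}$ for the update \eqref{eq:FO}. The plan is to test the fully discrete equation against the function that is identically one on $\overline\Omega$: this function lies in $\polV$ and, by the partition-of-unity property of the Lagrange basis, equals $\sum_{i\in\calI}\psi_i$. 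Concretely, I would sum \eqref{eq:FO} over all $i\in\calI$ and handle the three resulting terms separately.

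For the mass-matrix term, $\sum_{i\in\calI} m_{ij} = \big(\psi_j,\sum_{i\in\calI}\psi_i\big) = \int_\Omega\psi_j$, so, writing $f_h^{m} = \sum_{j\in\calI} f_j^{m}\psi_j$, this term collapses to $\tau_n^{-1}\big(\int_\Omega f_h^{n+1} - \int_\Omega f_h^{n}\big)$. For the advection term, $\sum_{i\in\calI} c_{ij}^{n} = \big(\pmb{\bbetaa}_h^{n}\SCAL\nabla\psi_j,\,1\big) = \int_\Omega \pmb{\bbetaa}_h^{n}\SCAL\nabla\psi_j$; integrating by parts separately in ${\pmb{x}}$ and in ${\pmb{v}}$, the boundary integrals vanish under the assumed (periodic) boundary conditions, and what remains is $-\int_\Omega(\nabla\SCAL\pmb{\bbetaa}_h^{n})\,\psi_j$, which is zero because $\pmb{\bbetaa}_h^{n}$ is divergence-free (the proposition established above). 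For the diffusion term, using only linearity of the $L^2$ inner product in its second slot, $\sum_{i\in\calI} d_{ij}^{n} = \big(\pmb{\mathsf{A}}^{n}\nabla\psi_j,\,\nabla\big(\sum_{i\in\calI}\psi_i\big)\big) = \big(\pmb{\mathsf{A}}^{n}\nabla\psi_j,\,\nabla 1\big) = 0$. Adding the three contributions gives $\int_\Omega f_h^{n+1} = \int_\Omega f_h^{n}$, and the statement for the high-order SSP--RK integrator then follows immediately, since each of its stages is a convex combination of forward-Euler updates, each of which preserves $\int_\Omega f_h$.

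The only place where the choice of viscosity could enter is the diffusion term, and the cancellation $\sum_{i} d_{ij}^{n} = 0$ holds for the fully anisotropic residual viscosity of Section~\ref{sec:newRV} for exactly the same reason it holds for the first-order viscosity \eqref{eq:AV:L}: the viscous term is in divergence form, so $d_{ij}^{n}$ is assembled as $\sum_l(\nu_{x_l}^{\textrm{H},n}\partial_{x_l}\psi_j,\partial_{x_l}\psi_i) + \sum_l(\nu_{v_l}^{\textrm{H},n}\partial_{v_l}\psi_j,\partial_{v_l}\psi_i)$, and summing over $i$ produces $\partial_{x_l}1 = 0$ and $\partial_{v_l}1 = 0$ regardless of whether the coefficients $\nu_{x_l}^{\textrm{H},n}$, $\nu_{v_l}^{\textrm{H},n}$ are taken in $\polV$ or in the reduced spaces $\polV_{{\pmb{x}}}$, $\polV_{{\pmb{v}}}$. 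I therefore do not expect a genuine obstacle; the two points that must be stated with some care are (i) the vanishing of the advection term, which relies both on the divergence-free property of $\pmb{\bbetaa}_h^{n}$ and on the ${\pmb{x}}$- and ${\pmb{v}}$-boundary integrals vanishing, and (ii) the explicit observation that passing to the reduced-space, anisotropic residual viscosity of Section~\ref{sec:newRV} leaves the mass-conservation argument untouched, so that the new scheme is conservative exactly as the first-order scheme of Proposition~\ref{prop:SE} is.
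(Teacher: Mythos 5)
Your proof is correct and follows essentially the same route as the paper: the paper's own proof simply defers to \cite[Prop.~3]{WEN2025114079} after noting that the key property $\sum_{i} d_{ij}^{n} = \sum_{j} d_{ij}^{n} = 0$ still holds for the anisotropic viscosity of Section~\ref{sec:newRV}, which is exactly the column-sum cancellation you establish via $\nabla\big(\sum_i\psi_i\big)=\nabla 1=0$. Your write-up merely makes explicit the remaining ingredients (partition of unity for the mass term, divergence-freeness of $\pmb{\bbetaa}_h^n$ plus periodic boundary conditions for the advection term) that the cited reference supplies.
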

\begin{proof}
  We refer to the proof in \cite[Prop. 3]{WEN2025114079}, and in this work, the new artificial viscosity defined in Section \ref{sec:newRV} preserves the property $\sum_{j=1}^N d_{ij}^n = \sum_{i=1}^N d_{ij}^n = 0$, so the same argument applies here as well. 
  \hfill $\square$
\end{proof}

\subsection{Tensor product FE approximations for Vlasov--Maxwell equations}
We now introduce the approach for implementing the finite element methods introduced above using the Kronecker product of matrices. The function $f_h$ can be denoted by $f_h(\pmb{x},\pmb{v},t)=\sum_{j\in\calI} f_j(t)\psi_j(\pmb{x},\pmb{v})$, and the equation \eqref{eq:GFEM:visc} can be reformulated as follows
\begin{equation}\label{eq:GFEM:ts}
  \begin{aligned}
    \bigg(\partial_t\sum_{j\in\calI} f_j\psi_j,\psi_i\bigg)+\bigg(\pmb{v}\cdot\nabla_{\pmb{x}}\sum_{j\in\calI}f_j\psi_j,\psi_i\bigg)+\bigg(\pmb{E}_h\cdot\nabla_{\pmb{v}}\sum_{j\in\calI}f_j\psi_j,\psi_i\bigg)\\
    +\bigg(\pmb{B}_h\cdot\Big(\nabla_{\pmb{v}}\sum_{j\in\calI}f_j\psi_j\times\pmb{v}\Big),\psi_i\bigg)+\bigg(\mathsf{A}_{{\pmb{x}}}\Big(\nabla_{\pmb{x}}\sum_{j\in\calI}f_j\psi_j\Big),\nabla_{\pmb{x}}\psi_i\bigg)\\+\bigg(\mathsf{A}_{{\pmb{v}}}\Big(\nabla_{\pmb{v}}\sum_{j\in\calI}f_j\psi_j\Big),\nabla_{\pmb{v}}\psi_i\bigg)=0,\qquad\forall i\in\calI.
  \end{aligned}
\end{equation}
We sort the basis functions $\{\psi_i\}_{i\in\calI}$ using such index: 
\begin{equation}
  \{\psi_1,\cdots\psi_{N_{\pmb{v}}},\cdots,\psi_{(N_{\pmb{x}}-1)N_{\pmb{v}}+1},\cdots,\psi_{N_{\pmb{x}} N_{\pmb{v}}}\},\notag
\end{equation}
which is equivalent to 
\begin{equation}
  \{\phi_1\varphi_1,\ldots,\phi_1\varphi_{N_{\pmb{v}}},\ldots,\phi_{N_{\pmb{x}}}\varphi_1,\ldots,\phi_{N_{\pmb{x}}}\varphi_{N_{\pmb{v}}}\}.\notag
\end{equation}
Let the vector $\polf=(f_1,\cdots,f_N)^{\mathsf{T}}$ be the collection of all the degrees of freedom of $f_h$, ordered as previously introduced. 

The equation \eqref{eq:GFEM:ts} yields the following system
\begin{equation}\label{eq:system}
  \begin{aligned}
    (\polM^{{\pmb{x}}}\otimes \polM^{{\pmb{v}}})\dot{\polf} + 
    \sum_{l=1}^3 
    \Big( 
      \polA^{\pmb{x},l}\otimes\polC^{\pmb{v},l}
      & + 
      \polC^{\pmb{x},l}(\bE)\otimes \polA^{\pmb{v},l} + 
      \polC^{\pmb{x},l}(\bB)\otimes \polG^{\pmb{v},l}
    \Big)  
      \polf\\
    & +
     (\polD^{{\pmb{x}}}\otimes \polM^{{\pmb{v}}})\polf+(\polM^{{\pmb{x}}}\otimes \polD^{{\pmb{v}}})\polf= 0,
  \end{aligned}
\end{equation}
where the operators in above equation are defined as follows
  \begin{align*}
    \polA_{ij}^{{\pmb{x}},l} &= \int_{\Omega_{\pmb{x}}}\textcolor{black}{(}\partial_{x_l}\phi_j\textcolor{black}{)}\phi_i\ {\rm d}\pmb{x}, \quad
    \polC_{ij}^{{\pmb{x}},l}(\bw) = \int_{\Omega_{\pmb{x}}} w_l\phi_j\phi_i\ {\rm d}\pmb{x}, 
    \quad l = 1,2,3,\\
    \polM_{ij}^{{\pmb{x}}} &= \int_{\Omega_{\pmb{x}}}\phi_j\phi_i\ {\rm d}\pmb{x}, \quad
    \polD_{ij}^{{\pmb{x}}}= \int_{\Omega_{\pmb{x}}}\mathsf{A}_{{\pmb{x}}}\textcolor{black}{(}\nabla_{\pmb{x}}\phi_j\textcolor{black}{)}\SCAL\textcolor{black}{(}\nabla_{\pmb{x}}\phi_i\textcolor{black}{)}\ {\rm d}\pmb{x}, \quad
    \forall i,j\in\calI_{\pmb{x}},
  \end{align*}
and
  \begin{align*}
    \polA_{ij}^{{\pmb{v}},l} &= \int_{\Omega_{\pmb{v}}}\textcolor{black}{(}\partial_{v_l}\varphi_j\textcolor{black}{)}\varphi_i\ {\rm d}\pmb{v}, \quad 
    \polC_{ij}^{{\pmb{v}},l} = \int_{\Omega_{\pmb{v}}} v_l\varphi_j\varphi_i\ {\rm d}\pmb{v}, \\
    \polG_{ij}^{\pmb{v},l} &= \int_{\Omega_{\pmb{v}}} \big(\nabla_{\pmb{v}} \CROSS \pmb{v} \big)_l \varphi_j \varphi_i {\rm d} \pmb{v} , 
    \quad l = 1,2,3, \\
    \polM_{ij}^{{\pmb{v}}} &=\int_{\Omega_{\pmb{v}}}\varphi_j\varphi_i\ {\rm d}\pmb{v}, \quad 
    \polD_{ij}^{{\pmb{v}}}= \int_{\Omega_{\pmb{v}}}\mathsf{A}_{{\pmb{v}}}\textcolor{black}{(}\nabla_{\pmb{v}}\varphi_j\textcolor{black}{)}\SCAL\textcolor{black}{(}\nabla_{\pmb{v}}\varphi_i\textcolor{black}{)}\ {\rm d}\pmb{v}, \quad \forall i,j\in\calI_{\pmb{v}}.
  \end{align*}
\begin{remark}
  In reduced Vlasov--Maxwell systems, equations \eqref{eq:system} will simplify to the equations introduced in section \ref{sec:reducedVM}.
\end{remark}

\subsection{Finite element approximation of the Maxwell’s equations}
Once the approximation $f_h^{n} \in \polV$ is obtained, according to \eqref{eq:denst}, we can compute the charge density and current density as follows
\begin{equation}
  \rho_h^n = \int_{\Omega_{\pmb{v}}}f_h^n\ {\rm d}\pmb{v},\qquad \pmb{J}_h^n = \int_{\Omega_{\pmb{v}}}\pmb{v}f_h^n\ {\rm d}\pmb{v}\notag.
\end{equation}
Since $f_h^n\in\polV:=\polV_{\pmb{x}}\otimes\polV_{\pmb{v}}$, it naturally holds true that $\rho_h^n\in\polV_{\pmb{x}}$ and $\pmb{J}_h^n\in[\polV_{\pmb{x}}]^{d_{\pmb{v}}}$. Now recall the modified current density \eqref{eq:J_h:visc}, we replace $\bJ^n_h$ with the following object
\begin{equation}\label{eq:current_mod}
  \widetilde{\bJ_h^n} =\bJ_h^n - \int_{\Omega_{\pmb{v}}}\mathsf{A}^n_{{\pmb{x}}}\GRAD_{{\pmb{x}}}f_h^n\ {\rm d}{\pmb{v}}= \bJ_h^n - \mathsf{A}^n_{{\pmb{x}}}\GRAD_{{\pmb{x}}}\rho_h^n.
\end{equation}

\label{sec:fem:maxwell}
Let us define an anisotropic polynomial space $\polQ_{\alpha_1, \alpha_2, ..., \alpha_{d_{\pmb{x}}}}$ composed of $d_{\pmb{x}}$-variate polynomials whose degree with respect to $x_i$ is at most $\alpha_i$. Then, we define the following spaces adopted from \cite[Sec.~3.9]{Monk_2003}:
\[
\pmb{\polE}:=
\{
\bw\in \bH(\textrm{curl};\Omega_{{\pmb{x}} }): [\GRAD_{\widehat{{\pmb{x}} }}\bT_K(\widehat{{\pmb{x}} }))]^\top \bw(\bT_K({\widehat{{\pmb{x}} }})) \in \calN_{k-1}(\widehat{K}), \forall K\in \calT_{{\pmb{x}} }
\},
\]
and
\[
\begin{aligned}
\pmb{\polB}:=
\{
\bw & \in \bH(\textrm{div};\Omega_{{\pmb{x}} }): \\
&
\det(\GRAD_{\widehat{{\pmb{x}} }}\bT_K(\widehat{{\pmb{x}} }))
[\GRAD_{\widehat{{\pmb{x}} }}\bT_K(\widehat{{\pmb{x}} }))]^{-1} 
\bw(\bT_K({\widehat{{\pmb{x}} }})) \in \calR\calT_{k-1}(\widehat{K}), 
 \forall K\in \calT_{{\pmb{x}} }
\},
\end{aligned}
\]
where
\[
\begin{aligned}
\calN_{k}(\widehat{K}) 
&:= 
\big[ \polQ_{k, k+1, k+1}(\widehat{K}), \polQ_{k+1, k, k+1}(\widehat{K}), \polQ_{k+1, k+1, k}(\widehat{K}) \big], \\
\calR\calT_{k}(\widehat{K}) 
&:= 
\big[ 
\polQ_{k+1, k, k}(\widehat{K}), 
\polQ_{k, k+1, k}(\widehat{K}), 
\polQ_{k, k, k+1}(\widehat{K}) \big] 
\end{aligned}
\]
are the well-known N\'ed\'elec and Raviart-Thomas elements of order $k$ for cartesian grids for the case of $d_{\pmb{x}}=3$.

With these spaces at hand, we obtain the finite element discretization of the Maxwell's equations \eqref{eq:AP:visc} as follows: for given $f^n_h \in \polV$, $\rho^n_h \in \polV_{\pmb{x}}$, $\pmb{J}^n_h \in [\polV_{\pmb{x}}]^{d_{\pmb{v}}}$, $\bE^n_h \in \pmb{\polE}$ and $\bB^n_h \in \pmb{\polB}$, we are looking for $\bE^{n+1}_h \in \pmb{\polE}$ and $\bB^{n+1}_h\in\pmb{\polB}$ from the following forward Euler scheme: 

\begin{equation}\label{eq:fem:AP11}
\begin{aligned}
  \Big(
	\frac{\bE^{n+1}_h - \bE^{n}_h}{\tau_n},\, \pmb{\eta} 
  \Big)
  &=
  \big(
  c^2\bB^n_h,\nabla_{\pmb{x}}\times\pmb{\eta}\big)-\big(\epsilon_0^{-1}\big(\bJ^n_h-\mathsf{A}^n_{{\pmb{x}}}\GRAD_{{\pmb{x}}}\rho^n_h\big),\ \pmb{\eta}
  \big), \quad \forall \pmb{\eta} \in \pmb{\polE},
  \\
	\Big(
	\frac{\bB^{n+1}_h - \bB^{n}_h}{\tau_n},\, \pmb{\xi} 
  \Big)
  &=
  \big(
  -\nabla_{{\pmb{x}} }\CROSS\bE^n_h,\, \pmb{\xi}
  \big), \quad \forall \pmb{\xi} \in \pmb{\polB}.
\end{aligned}
\end{equation}
Moreover, the forward Euler discretization of \eqref{eq:ctcharge:visc} gives us
\begin{equation}\label{eq:discreteContinuityEqn}
    \big(\GRAD_{{\pmb{x}}}\SCAL \big(\bJ^n_h-\mathsf{A}^n_{{\pmb{x}}}\GRAD_{{\pmb{x}}}\rho^n_h\big),\phi_i\big)=
  -\Big(
	\frac{\rho^{n+1}_h - \rho^{n}_h}{\tau_n},\, \phi_i
  \Big),\qquad\forall i\in\calI_\bx,
\end{equation}
and we introduce the computation of $\rho_h^n$ and $\bJ^n_h$ in section \ref{sec:density}.

\begin{proposition}[Discrete Gauss' law] Suppose the initial condition is chosen such that the Gauss' law \eqref{eq:GL} is satisfied and the discrete continuity equation \eqref{eq:discreteContinuityEqn} holds for all time steps. Then, the finite element solutions $\bE_h^{n+1}$ and $\bB_h^{n+1}$ of the scheme \eqref{eq:fem:AP11} satisfy the Gauss' law weakly, \ie
\begin{equation}\label{eq:weak:GL}
\begin{aligned}
\Big(
\frac{\bE^{n+1}_h - \bE^{n}_h}{\tau_n}, \, \GRAD_{{\pmb{x}}}\phi_i
\Big) &= -\Big(
	\epsilon_0^{-1}\frac{\rho^{n+1}_h - \rho^{n}_h}{\tau_n},\,\phi_i 
  \Big), \qquad \forall i\in \calI_\bx,\\
\GRAD_{{\pmb{x}}} \SCAL \bB_h^{n+1} &= 0, \mbox{ a.e.}. \\
\end{aligned}
\end{equation}
\end{proposition}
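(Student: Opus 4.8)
The plan is to verify the two claims in \eqref{eq:weak:GL} separately, exploiting the finite element exterior calculus structure that $\GRAD_{\pmb{x}} \polV_{\pmb{x}} \subset \pmb{\polE}$ and $\GRAD_{\pmb{x}} \CROSS \pmb{\polE} \subset \pmb{\polB}$.

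\textbf{Electric Gauss' law.} First I would take the first equation of the scheme \eqref{eq:fem:AP11} and restrict the test function to $\pmb{\eta} = \GRAD_{\pmb{x}} \phi_i$, which is a legitimate choice since $\phi_i \in \polV_{\pmb{x}}$ and the Nédélec space contains the gradients of the scalar continuous space. The curl term $(c^2 \bB_h^n, \GRAD_{\pmb{x}} \CROSS \GRAD_{\pmb{x}} \phi_i)$ vanishes identically because $\GRAD_{\pmb{x}} \CROSS \GRAD_{\pmb{x}} \phi_i = 0$. This leaves
\[
\Big(\frac{\bE_h^{n+1} - \bE_h^n}{\tau_n},\, \GRAD_{\pmb{x}} \phi_i \Big) = -\big(\epsilon_0^{-1}(\bJ_h^n - \mathsf{A}_{\pmb{x}}^n \GRAD_{\pmb{x}} \rho_h^n),\, \GRAD_{\pmb{x}} \phi_i\big).
\]
Now I integrate by parts on the right-hand side (the boundary term drops by the periodic boundary conditions), turning it into $\epsilon_0^{-1}\big(\GRAD_{\pmb{x}} \SCAL (\bJ_h^n - \mathsf{A}_{\pmb{x}}^n \GRAD_{\pmb{x}} \rho_h^n),\, \phi_i\big)$, and then invoke the discrete continuity equation \eqref{eq:discreteContinuityEqn}, which identifies this with $-\epsilon_0^{-1}\big((\rho_h^{n+1} - \rho_h^n)/\tau_n,\, \phi_i\big)$. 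This yields the first line of \eqref{eq:weak:GL} directly. One subtlety to check: whether the integration by parts against $\bJ_h^n - \mathsf{A}_{\pmb{x}}^n \GRAD_{\pmb{x}}\rho_h^n$ is valid in the sense required by \eqref{eq:discreteContinuityEqn}, i.e. that the pairing $(\GRAD_{\pmb{x}} \SCAL(\cdot), \phi_i)$ appearing there is exactly the weak divergence of the same object — this is a consistency-of-notation point rather than a real difficulty.

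\textbf{Magnetic Gauss' law.} For the second claim I would take the second equation of \eqref{eq:fem:AP11} and use that $-\GRAD_{\pmb{x}} \CROSS \bE_h^n$ lies in $\pmb{\polB}$ (curl of Nédélec is Raviart--Thomas), so the scheme can be read pointwise: $\bB_h^{n+1} = \bB_h^n - \tau_n \GRAD_{\pmb{x}} \CROSS \bE_h^n$ as an identity in $\pmb{\polB}$, provided the mass matrix on $\pmb{\polB}$ is invertible (it is). Taking the distributional divergence and using $\GRAD_{\pmb{x}} \SCAL (\GRAD_{\pmb{x}} \CROSS \bE_h^n) = 0$ gives $\GRAD_{\pmb{x}} \SCAL \bB_h^{n+1} = \GRAD_{\pmb{x}} \SCAL \bB_h^n$ almost everywhere; by the hypothesis that Gauss' law holds initially (so $\GRAD_{\pmb{x}} \SCAL \bB_h^0 = 0$ a.e.) and induction on $n$, this equals zero. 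Here I should note that $\bB_h^n \in \pmb{\polB} \subset \bH(\textrm{div};\Omega_{\pmb{x}})$, so its divergence is a well-defined $L^2$ function and the pointwise statement makes sense.

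The main obstacle, such as it is, is not any single hard estimate but rather making sure the exterior-calculus inclusions $\GRAD_{\pmb{x}}\polV_{\pmb{x}} \subset \pmb{\polE}$ and $\GRAD_{\pmb{x}}\CROSS\pmb{\polE}\subset\pmb{\polB}$ are actually available for the anisotropic $\polQ$-type Nédélec/Raviart--Thomas spaces defined in Section~\ref{sec:fem:maxwell} — these are the de Rham complex properties of the chosen elements, which hold for the standard tensor-product families on Cartesian grids (and I would cite \cite{Monk_2003} for this), but the proof genuinely hinges on them. Everything else is substituting the right test function and an integration by parts that is clean because of the periodic boundary conditions.
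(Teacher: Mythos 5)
Your proposal is correct and follows essentially the same route as the paper's proof: testing Amp\`ere's law with $\pmb{\eta}=\GRAD_{{\pmb{x}}}\phi_i$, killing the curl term, integrating by parts, and invoking the discrete continuity equation for the electric law; and using $\GRAD_{{\pmb{x}}}\CROSS\pmb{\polE}\subset\pmb{\polB}$ to read Faraday's law pointwise and take its divergence for the magnetic law. Your explicit remarks on the de~Rham inclusions and the induction on $n$ are points the paper leaves implicit, but they do not change the argument.
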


\begin{proof}
Since, for any function $\phi_i\in\polV_\bx$, we have $\nabla_{\pmb{x}}\phi_i\in \pmb{\polE}$. Thus, we test \eqref{eq:fem:AP11} with the test function $\pmb{\eta} = \GRAD_{{\pmb{x}}}\phi_i$ and obtain initially:
\[
  \Big(
	\frac{\bE^{n+1}_h - \bE^{n}_h}{\tau_n},\, \GRAD_{{\pmb{x}}}\phi_i
  \Big)
  =
  \big(
  c^2\bB^n_h,\nabla_{{\pmb{x}} }\CROSS(\nabla_{\pmb{x}}\phi_i)\big)-\big(\epsilon_0^{-1}\big(\bJ^n_h-\mathsf{A}^n_{{\pmb{x}}}\GRAD_{{\pmb{x}}}\rho^n_h\big),\ \GRAD_{{\pmb{x}}}\phi_i
  \big).  
\]
Integration by parts of the right hand side term, and taking into account the periodic boundary conditions, we obtain
\begin{equation*}\label{eq:prop:gl:E}
  \begin{aligned}
  \Big(
	\frac{\bE^{n+1}_h - \bE^{n}_h}{\tau_n},\, & \GRAD_{{\pmb{x}}}\phi_i
  \Big)
   =\\
   & \big(
  c^2\bB^n_h,\nabla_{{\pmb{x}} }\CROSS(\nabla_{\pmb{x}}\phi_i)\big)+\big(\epsilon_0^ {-1}\GRAD_{{\pmb{x}}}\SCAL\big(\bJ^n_h -\mathsf{A}^n_{{\pmb{x}}}\GRAD_{{\pmb{x}}}\rho^n_h\big),\phi_i
  \big).  
\end{aligned}
\end{equation*}
Note that, the first term of the right hand side is zero because $\nabla_{\pmb{x}}\times(\nabla_{\pmb{x}}\phi_i)=0$ almost everywhere, $\forall w\in\polV_{\pmb{x}}$, and for the second term we can use  \eqref{eq:discreteContinuityEqn} to find
\[
  \Big(
	\frac{\bE^{n+1}_h - \bE^{n}_h}{\tau_n},\, \GRAD_{{\pmb{x}}}\phi_i 
  \Big)
  =
-\Big(
	\epsilon_0^{-1}\frac{\rho^{n+1}_h - \rho^{n}_h}{\tau_n},\,\phi_i  
  \Big), \qquad \forall i \in \calI_\bx.
\]
It proves the first equality of \eqref{eq:weak:GL}, which is a discrete weak version of the Gauss' law \eqref{eq:GL_E_dt}.

To prove the second statement in \eqref{eq:weak:GL}, we observe that $\nabla_{{\pmb{x}}} \times \bE^n_h \in \pmb{\polB}$, which implies 
\[
	\frac{\bB^{n+1}_h - \bB^{n}_h}{\tau_n}
  +\nabla_{{\pmb{x}} }\CROSS\bE^n_h = 0, \quad \mbox{ a.e.}.
\]

Now, taking the divergence of this equality and noting $\nabla_{{\pmb{x}}} \cdot \bB^{n}_h = 0$ and $\nabla_{{\pmb{x}}} \cdot (\nabla_{{\pmb{x}}} \times \bE^n_h) = 0$ almost everywhere, we conclude that $\nabla_{{\pmb{x}}} \cdot \bB^{n+1}_h = 0$ almost everywhere.
\end{proof}

\subsection{Computation of the densities}\label{sec:density}
In the system of Vlasov--Maxwell equations, the charge density $\rho$ and current density $\pmb{J}$ are derived from the distribution function $f$. In addition, in the section \ref{sec:RV}, we have introduced the residual-based viscosity using the residuals of the conservation laws \eqref{eq:uv_ux}, where $u_{x/v}$ and $\mathbf{F}_{{\pmb{x}}/{\pmb{v}}}$ also come from $f$. In the case of Vlasov--Maxwell, $u_x = \rho$ and $\mathbf{F}_\bx=\pmb{J}$. In this section, we introduce the algorithms used to compute these quantities. 

The function $f_h^n$ can be expressed as follows 
\begin{equation}
  \begin{aligned}
    f_h^n&=\sum_{l\in\calI}\psi_lf_l=\sum_{i\in\calI_{\pmb{x}}}\sum_{j\in\calI_{\pmb{v}}}\psi_{(i-1)N_{\pmb{v}}+j}f_{(i-1)N_{\pmb{v}}+j}\\
         &=\sum_{i\in\calI_{\pmb{x}}}\sum_{j\in\calI_{\pmb{v}}}\phi_i\varphi_jf_{i,j},
  \end{aligned}\notag
\end{equation}
using this formulation, we can compute the charge density by simply computing the following expression

\begin{equation}
  \begin{aligned}
  \rho^n_h({\pmb{x}}) 
  &\,=
  \int_{\Omega_{{\pmb{v}}}} f^n_h({\pmb{x}}, {\pmb{v}}) \ud {\pmb{v}}=\int_{\Omega_{\pmb{v}}}\sum_{l\in\calI}\psi_i({\pmb{x}},{\pmb{v}})f_l^n\ {\rm d}{\pmb{v}} \\
  &\,=\int_{\Omega_{\pmb{v}}}\sum_{i\in\calI_{\pmb{x}}}\sum_{j\in\calI_{\pmb{v}}}\phi_i({\pmb{x}})\varphi_j({\pmb{v}})f_{i,j}^n\ {\rm d}{\pmb{v}}\\
  &\,=\sum_{i\in \calI_{\pmb{x}}}\left(\phi_i({\pmb{x}})\int_{\Omega_{{\pmb{v}}}}\sum_{j\in\calI_{\pmb{v}}}\varphi_j({\pmb{v}})f_{i,j}^n\ud {\pmb{v}}\right),
  \end{aligned}\notag
  \end{equation}
similarly, we compute the current density by
  \begin{equation}
  \begin{aligned}
  \bJ_h^n({\pmb{x}}) 
  &\,=
  \int_{\Omega_{{\pmb{v}}}} {\pmb{v}} f^n_h({\pmb{x}}, {\pmb{v}}) \ud {\pmb{v}} \\
  &\,=\sum_{i\in \calI_{\pmb{x}}}\left(\phi_i({\pmb{x}})\int_{\Omega_{{\pmb{v}}}}{\pmb{v}}\sum_{j\in\calI_{\pmb{v}}}\varphi_j({\pmb{v}})f_{i,j}^n\ud {\pmb{v}}\right).
  \end{aligned}\notag
  \end{equation}
Therefore, the nodal values of $\rho_h^n$ and $\pmb{J}_h^n$ are computed as follows
\begin{equation}\label{eq:int:v}
  \rho^n_i = \int_{\Omega_{{\pmb{v}}}}f^n_{i,\cdot}({\pmb{v}})\ud {\pmb{v}},\qquad\pmb{J}^n_i =\int_{\Omega_{{\pmb{v}}}}{\pmb{v}} f^n_{i,\cdot}({\pmb{v}})\ud {\pmb{v}},\qquad\forall i\in\calI_{\pmb{x}},
\end{equation}
where $f^n_{i,\cdot}\in\polV_{{\pmb{v}}}$ and $f^n_{i,\cdot} = \sum_{j\in\calI_{\pmb{v}}}\varphi_jf_{i,j}^n$. The discrete form of $u_v$ and $\mathbf{F}_{\pmb{v}}$ are given by:
\begin{equation}
    u_{h,v}^n= \int_{\Omega_{\pmb{x}}}f_h^n\ {\rm d}{\pmb{x}},\qquad \mathbf{F}_{h,{\pmb{v}}}^n= \int_{\Omega_{\pmb{v}}}\left(\bE_h^n f_h^n+({\pmb{v}}\times\bB_h^n) f_h^n\right)\ {\rm d}{\pmb{x}},\notag
\end{equation}
and the nodal values of $u^n_{h,v}$ and $\mathbf{F}^n_{h,\bv}$ can be obtained as follows
\begin{equation}\label{eq:int:x}
  u^n_{v,i} = \int_{\Omega_{{\pmb{x}}}}f^n_{\cdot,i}({\pmb{x}})\ud {\pmb{x}},\qquad\mathbf{F}^n_{\bv,i} =\int_{\Omega_{{\pmb{x}}}}(\pmb{E}_h^nf^n_{\cdot,i}({\pmb{x}})+(\pmb{v}\times\pmb{B}_h^n)f^n_{\cdot,i}({\pmb{x}}))\ud {\pmb{x}},\qquad\forall i\in\calI_{\pmb{v}},
\end{equation}
where $f^n_{\cdot,i}\in\polV_{{\pmb{x}}}$ and $f^n_{\cdot,i} = \sum_{j\in\calI_{\pmb{x}}}\phi_if_{j,i}^n$.
\begin{remark}
  We refer to the method introduced in \cite[Section~3.4]{WEN2025114079}, where an algorithm for computing $\rho_h$ is presented in the context of the Vlasov--Poisson system. In both algorithms, the density quantities are computed exactly; the main difference lies in the implementation details. In this work, the function spaces $\polV_{{\pmb{x}}}$ and $\polV_{{\pmb{v}}}$ are defined separetely, which makes the computations in \eqref{eq:int:v} and \eqref{eq:int:x} straightforward, and we implement our methods using FEniCSx \cite{barrata2023dolfinx}.
\end{remark}

\section{Numerical examples}\label{sec:exp}
In this section, we present the numerical results of our methods in various benchmark problems of Vlasov--Maxwell equations. We use the fourth-order, five-stage SSP-RK method from \cite{Kraaijevanger_1991} for all the experiments in this manuscript. The time step is controlled by the CFL condition \eqref{eq:CFL}; the CFL number we use is $\lambda=0.4$. The time-stepping algorithm of the methods can be described by Algorithm \ref{alg:FEVM}. We note that the artificial viscosity matrix is updated after each time step iteration. 

The finite element method is a grid-based scheme, and in our experiments, we manually cut the velocity space. The distribution functions are initialized with Gaussian perturbations in velocity space, with the velocity boundaries defined such that the value of $f$ is neglectable at the edges of the velocity space.

\begin{algorithm}[hbt!]
  \caption{Time-stepping procedure}\label{alg:FEVM}
  \SetKwInput{KwInput}{Input}
  \SetKwInput{KwOutput}{Output}
  \KwInput{Initial conditions: $f_0$, $\pmb{E}_0$, $\pmb{B}_0$; Final time: $T$}
  \KwOutput{$f_h(t = T)$, $\pmb{E}_h(t = T)$, $\pmb{B}_h(t = T)$}
  
  Initialize: $f_h^0 \leftarrow f_0$, $\pmb{E}_h^0 \leftarrow \pmb{E}_0$, $\pmb{B}_h^0 \leftarrow \pmb{B}_0$, $t \leftarrow 0$, $n \leftarrow 0$\;
  
  \While{$t < T$}{
    Compute $u^{n}_{h,x}$, $u^{n}_{h,v}$, $\mathbf{F}^{n}_{h,{\pmb{x}}}$, and $\mathbf{F}^{n}_{h,{\pmb{v}}}$\ using \eqref{eq:int:v} and \eqref{eq:int:x}\;
    Compute the residual $R_{x}^{n}$ and $R_{v}^{n}$ using \eqref{eq:residual:density}\;
    Compute the viscosity $\pmb{\mathsf{A}}^{n}$ using \eqref{eq:AV:L=H2}\;
    Determine the time step $\Delta t = \tau_n$ using \eqref{eq:CFL}\;
    Apply the Runge-Kutta scheme to update $f_h^{n+1}$, $\pmb{E}_h^{n+1}$, and $\pmb{B}_h^{n+1}$\;
    Update time: $t \leftarrow t + \Delta t$, $n \leftarrow n + 1$\;
  }
\end{algorithm}

\subsection{Vlasov--Maxwell in 1D2V and 2D2V phase-space}\label{sec:reducedVM}
Assuming all physical parameters are set to 1, we consider a single species and apply our methods to the Vlasov–Maxwell equations in the following reduced phase spaces.

Let $\Omega := \Omega_x\otimes\Omega_{{\pmb{v}} }$, where $\Omega_x\subset\mathbb{R}^1$, $\Omega_{{\pmb{v}} }:=\Omega_{v_1}\otimes\Omega_{v_2}\subset\mathbb{R}^2$. The electromagnetic fields are given by
\begin{equation*}
    \pmb{E}(x,t) = (E_1(x,t),E_2(x,t),0),\qquad\pmb{B}(x,t) = (0,0,B_3(x,t)). 
\end{equation*}
The advection field of the Vlasov equation becomes
\[
    \pmb{\bbetaa} = \begin{pmatrix}
        v_1,E_1+v_2B_3,E_2-v_1B_3
    \end{pmatrix}^\top,\qquad\nabla = \begin{pmatrix}
        \partial_x,\partial_{v_1},\partial_{v_2}
    \end{pmatrix}^\top.
\]
The coupled Maxwell's equations are given by
\begin{equation}
\begin{aligned}
  \partial_t E_1&=-J_1,\\
  \partial_t E_2&=-\partial_x B_3-J_2,\\
  \partial_t B_3&= -\partial_x E_2 ,\\
  \partial_x E_1&=\rho-\rho_0.
\end{aligned}\notag
\end{equation}
In this simplified setting, the system \eqref{eq:system} for the Vlasov--Maxwell equations reduces to:
\begin{equation}
\begin{aligned}
    (\polM^x\otimes \polM^\bv)\dot{\polf} + 
    \Big( 
      \polA^{x,1}\otimes\polC^{\bv,1} 
      & + 
       \sum_{l=1}^2\polC^{x,l}(\bE)\otimes \polA^{\bv,l} + 
      \polC^{x,3}(\bB)\otimes \polG^{\bv,3}
    \Big)  
      \polf\\
    & +
     (\polD^x\otimes \polM^\bv+\polM^x\otimes \polD^\bv)\polf= 0,
  \end{aligned}\notag
\end{equation}

Next, let $\Omega := \Omega_{\pmb{x}}\otimes\Omega_{\pmb{v}}$, where $\Omega_{\pmb{x}}:=\Omega_{x_1}\otimes\Omega_{x_2}\subset\mathbb{R}^2$, $\Omega_{{\pmb{v}} }:=\Omega_{v_1}\otimes\Omega_{v_2}\subset\mathbb{R}^2$. The fields are then simplified as follows
\begin{equation*}
    \pmb{E}({\pmb{x}},t) = (E_1({\pmb{x}},t),E_2({\pmb{x}},t),0),\qquad\pmb{B}({\pmb{x}},t) = (0,0,B_3({\pmb{x}},t)). 
\end{equation*}
The electromagnetic fields are given by
\[
    \pmb{\bbetaa} = \begin{pmatrix}
        v_1,v_2,E_1+v_2B_3,E_2-v_1B_3
    \end{pmatrix}^\top,\qquad\nabla = \begin{pmatrix}
        \partial_{x_1},\partial_{x_2},\partial_{v_1},\partial_{v_2}
    \end{pmatrix}^\top.
\]
The coupled Maxwell's equations are given by
\begin{equation}
\begin{aligned}
  \partial_t E_1&=\partial_{x_2}B_3-J_1,\\
  \partial_t E_2&=-\partial_{x_1} B_3-J_2,\\
  \partial_t B_3&= -\partial_{x_1} E_2+\partial_{x_2}E_1 ,\\
  \partial_{x_1} E_1+\partial_{x_2} E_2&=\rho-\rho_0.
\end{aligned}\notag
\end{equation}
Similarly, in the 2D2V phase space, system \eqref{eq:system} becomes
\begin{equation}
\begin{aligned}
    (\polM^\bx\otimes \polM^\bv)\dot{\polf} + 
    \Big( 
      \sum_{l=1}^2\polA^{\bx,l}\otimes\polC^{\bv,l} 
      & + 
       \sum_{l=1}^2\polC^{\bx,l}(\bE)\otimes \polA^{\bv,l} + 
      \polC^{\bx,3}(\bB)\otimes \polG^{\bv,3}
    \Big)  
      \polf\\
    & +
     (\polD^\bx\otimes \polM^\bv+\polM^\bx\otimes \polD^\bv)\polf= 0.
  \end{aligned}\notag
\end{equation}

\begin{remark}
Note that in both reduced models, only the third component of the magnetic field is nonzero and there is no dependence on $x_3$. Therefore, the magnetic Gauss's law is trivially satisfied.
\end{remark}

\subsection{Landau damping}
First, we study the electrostatic example of strong Landau damping in 1D2V to verify the necessity of using artificial viscosity. We take the following initial distribution
\begin{equation}
  f_0(x,{\pmb{v}}) = \frac{1}{2\pi}{\rm exp}\left(-\frac{v_1^2+v_2^2}{2}\right)(1+\alpha{\rm cos}(\theta x)),\notag
\end{equation}
where $\alpha = 0.5$ and $\theta=0.5$, in the \textcolor{black}{computational} phase space \textcolor{black}{domain} $\Omega = [0,2\pi/\theta]\otimes[-5,5]^2$. The fields $E_2(x,t)$ and $B_3(x,t)$ are set to be $0$ during the whole simulation; we obtain $E_1(x,0)$ by solving the Poisson equation \eqref{eq:poisson} and updating $E_1$ with the Ampère's equation \eqref{eq:AP}. We apply both the standard Galerkin finite elements and viscous regularization methods, and solve the system on a mesh consisting of $31\times61^2$ nodes; we plot the time evolution of the electric energy $\calE_1=\frac{1}{2}\int_{\Omega_x}(E_1)^2\ {\rm d}x$ on a logarithmic scale in Figure~\ref{fig:LD}.
\begin{figure}[htbp]
  \centering
  \begin{subfigure}[b]{.5\textwidth}
    \includegraphics[width=\linewidth]{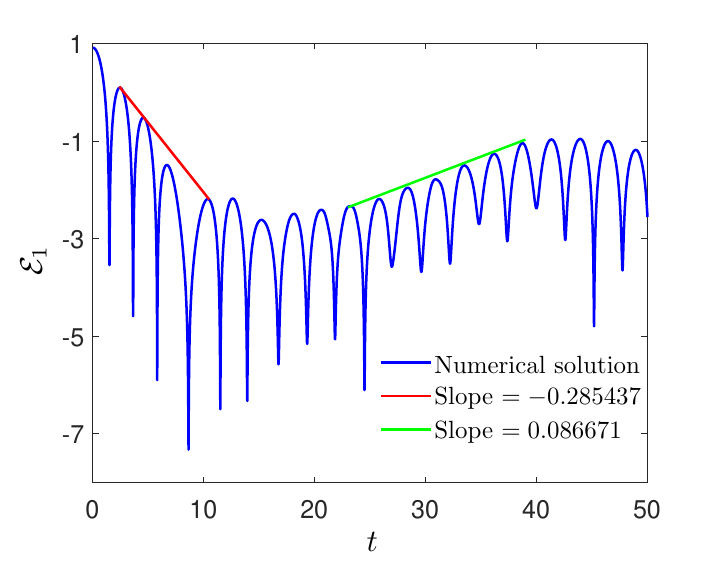}
    \caption{$\polQ_1$ viscous regularization solution}
  \end{subfigure}\hfill
  \centering
  \begin{subfigure}[b]{.5\textwidth}
    \includegraphics[width=\linewidth]{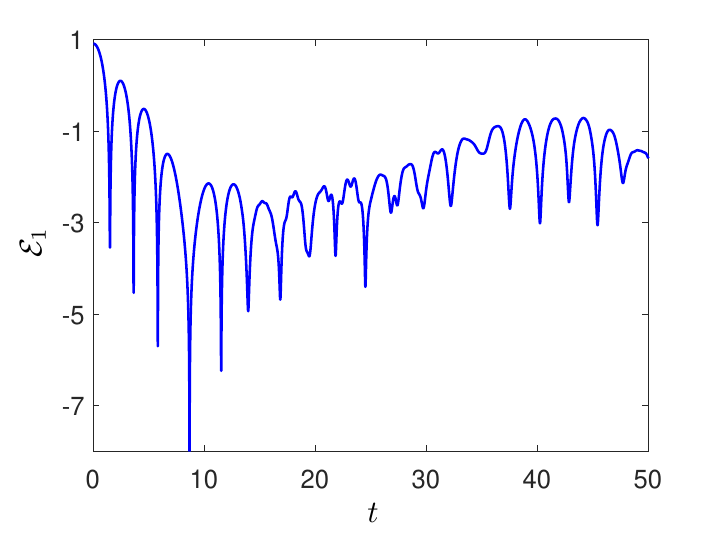}
    \caption{$\polQ_1$ Galerkin solution}
  \end{subfigure}\hfill
  \centering
  \begin{subfigure}[b]{.5\textwidth}
    \includegraphics[width=\linewidth]{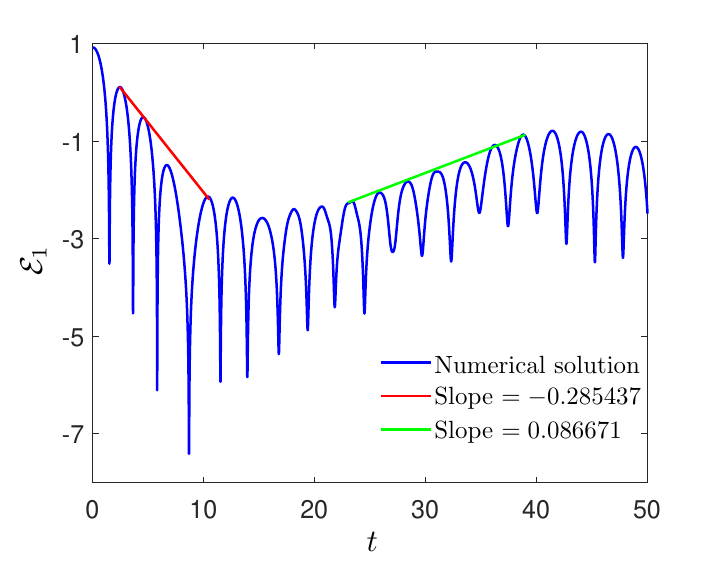}
    \caption{$\polQ_2$ viscous regularization solution}
  \end{subfigure}\hfill
  \centering
  \begin{subfigure}[b]{.5\textwidth}
    \includegraphics[width=\linewidth]{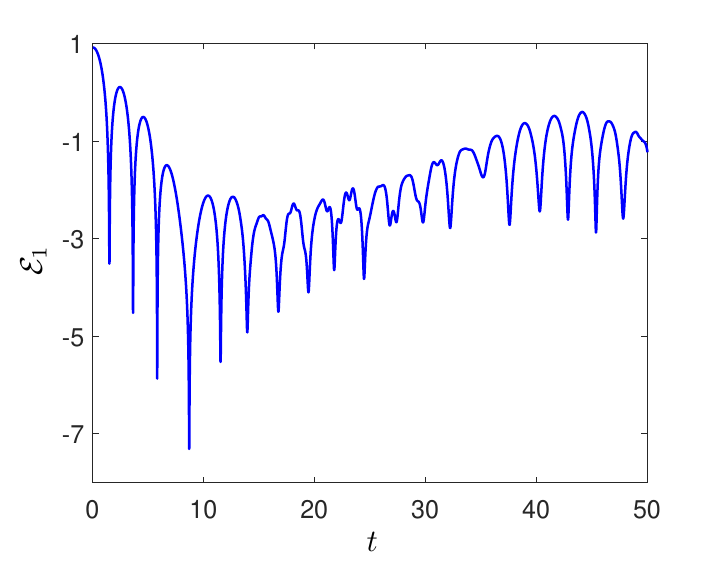}
    \caption{$\polQ_2$ Galerkin solution}
  \end{subfigure}\hfill
  \begin{subfigure}[b]{.5\textwidth}
    \includegraphics[width=\linewidth]{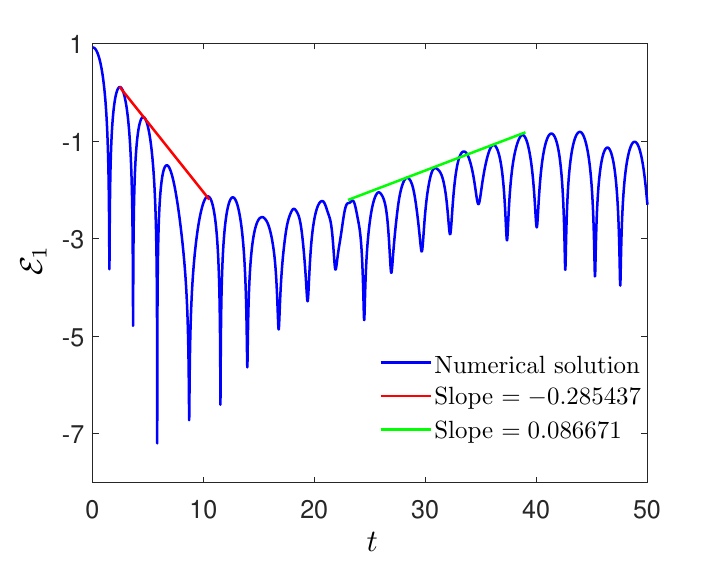}
    \caption{$\polQ_3$ viscous regularization solution}
  \end{subfigure}\hfill
  \centering
  \begin{subfigure}[b]{.5\textwidth}
    \includegraphics[width=\linewidth]{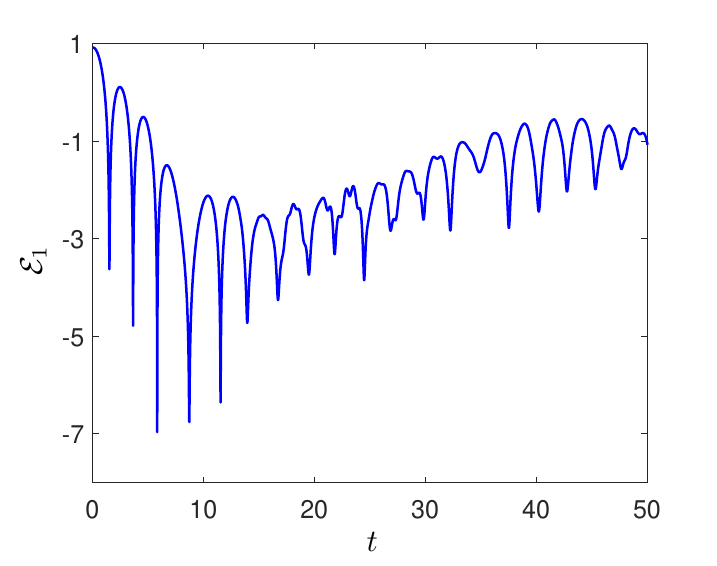}
    \caption{$\polQ_3$ Galerkin solution}
  \end{subfigure}\hfill
  \caption{ Landau damping: electric energy with fitted damping and growth rates. The number of degrees of freedoms is $31\times61^2$ for all the results presented here.}
  \label{fig:LD}
\end{figure}

On the one hand, we observe spurious oscillations in the solutions obtained with Galerkin's method for all polynomial degrees, as shown in Figure~\ref{fig:LD} (b), (d), and (f). These spurious oscillations are expected, as strong filamentation occurs in the solutions of Landau damping when $\alpha = 0.5$. This issue was observed in other discretization schemes, for instance, \cite[Fig. 4]{MR1977366}, where the authors compared several schemes to solve the Landau damping problem. On the other hand, the viscous regularization method yields stable solutions for all polynomial spaces. We plot the results of the regularized scheme in Figure~\ref{fig:LD} (a), (c), and (e). We fit the damping and growth rates of the electric energy. These results align closely with the growth rates reported in \cite[Table~3]{Kraus_Kormann_Morrison_Sonnendrücker_2017}, regardless of the polynomial degree. The artificial viscosity introduced in this article serves the same purpose of stabilizing the solutions as the positive and flux-conservative (PFC) technique described in \cite{filbet2001}. 

Next, we study the linear Landau damping in the 2D2V setting, using the same parameters as in reference \cite{ricketson2024explicitenergyconservingparticleincellscheme}. We take the following initial distribution
\begin{equation}
  f_0({\pmb{x}},{\pmb{v}}) = \frac{1}{2\pi}{\rm exp}\left(-\frac{v_1^2+v_2^2}{2}\right)(1+\alpha{\rm cos}(\theta x_1))(1+\alpha{\rm cos}(\theta x_2)),\notag
\end{equation}
in the phase space $\Omega = [0,L]^2\otimes[-5,5]^2$, where $L=22$, $\alpha = 0.05$, and $\theta=2\pi/L$. According to \cite[Section 5.1]{ricketson2024explicitenergyconservingparticleincellscheme}, the theoretical damping rate of the electric energy is given by
\begin{equation}
  \gamma\approx\sqrt{\frac{\pi}{2}}\frac{\omega^2}{2\theta^3}\exp\left(\frac{-\omega^2}{2\theta^2}\right),\qquad\omega^2=1+3\theta^2.\notag
\end{equation}
We plot $\calE=\frac{1}{2}\int_{\Omega_{\pmb{x}}}\pmb{E}^2\ {\rm d}{\pmb{x}}$ and the reference damping rate in Figure~\ref{fig:2D2Vld} (a); the solution is computed using the viscous regularization method with $\polQ_1$ elements, and the number of degrees of freedom is $33^2\times65^2$.
Our numerical results in the high-dimensional case show good agreement with the theoretical damping rate. We also verify whether Gauss' law is satisfied. Recalling \eqref{eq:weak:GL}, our methods preserve the Gauss' law in the weak form, i.e.,$\big(\bE^{n}_h,\, \GRAD_{{\pmb{x}}}\phi_i\big)=-\big(\rho^{n}_h,\,\phi_i\big), \forall i \in \calI_{\bx}$. To quantify the error in Gauss' law, we define the following measurement for the error of the Gauss' law:
\begin{equation}
  {\rm error} = \sqrt{\sum_{i=1}^{N_\bx}\Big(\big(\bE^{n}_h,\, \GRAD_{{\pmb{x}}}\phi_i\big)+\big(\rho^{n}_h,\,\phi_i\big)\Big)^2}\notag.
\end{equation}
The error of Gauss' law is shown in Figure \ref{fig:2D2Vld} (b). As observed, it resembles random noise, which can be attributed to round-off errors.

\begin{figure}[htbp]
  \centering
  \begin{subfigure}[b]{0.5\textwidth}
    \includegraphics[width=\linewidth]{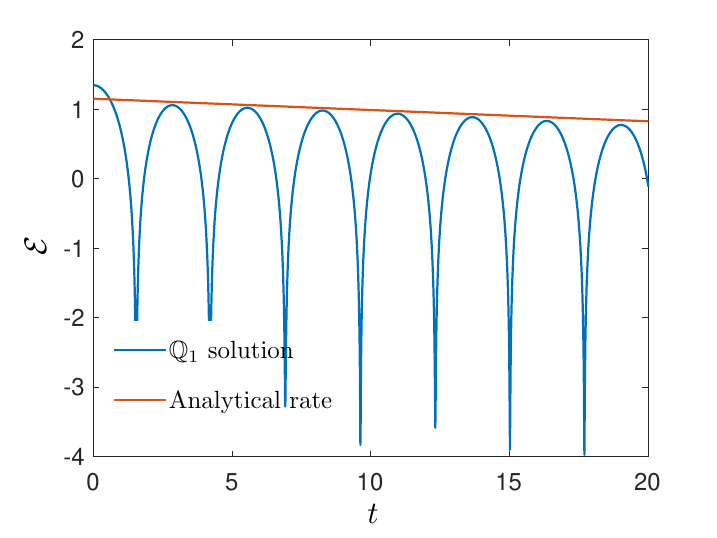}
      \caption{$\calE$}
  \end{subfigure}\hfill
  \begin{subfigure}[b]{0.5\textwidth}
    \includegraphics[width=\linewidth]{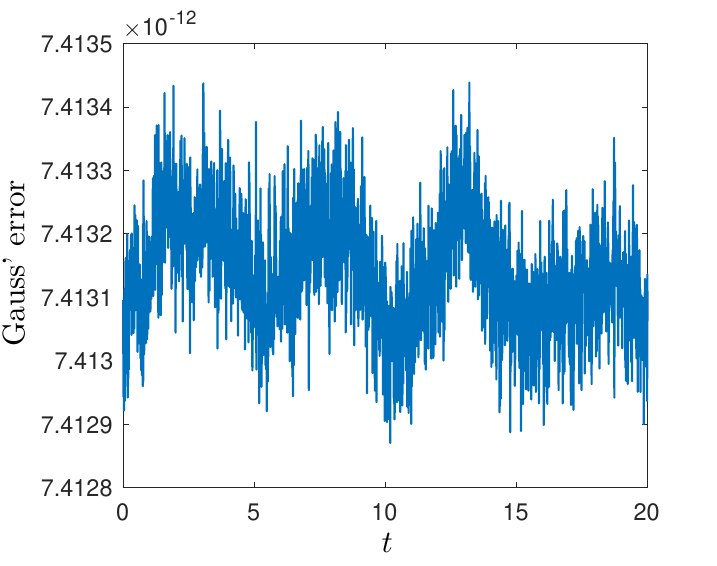}
      \caption{Gauss' error}
  \end{subfigure}
  \hfill
  \caption{Landau damping: time evolution of $\calE$ (a) and Gauss' error (b). The solution is obtained using $\polQ_1$ elements, and the number of degrees of freedoms is $33^2\times65^2$.}
  \label{fig:2D2Vld}
\end{figure}

\subsection{Weibel instability}
In this example, we apply the proposed method to solve the so-called Weibel instability. We use the same initial settings as in \cite{MR3294671}. The initial distribution and fields are given by:
\begin{equation*}
    \begin{aligned}
  f_0(x,{\pmb{v}} )&= \frac{1}{2\pi\sigma_1\sigma_2}{\rm exp}\left(-\frac{1}{2}\left(\frac{v_1^2}{\sigma_1^2}+\frac{v_2^2}{\sigma_2^2}\right)\right)(1+\alpha{\rm cos}(\theta x)),\\
  B_{0,3}(x) &= \beta{\rm cos}(\theta x),\\
  E_{0,2}(x) &= 0,
    \end{aligned}
\end{equation*}
and $E_{0,1}(x)$ is computed from the Poisson equation \eqref{eq:poisson}. We use the following parameters in the numerical simulations: $\sigma_1 = 0.02/\sqrt{2}$, $\sigma_2 = \sqrt{12}\sigma_1$, $\theta = 1.25$, $\alpha = 10^{-4}$, and $\beta = 10^{-4}$, and the \textcolor{black}{computational} phase space \textcolor{black}{domain} is $\Omega=[0,2\pi/\theta]\otimes[-5\sigma_1,5\sigma_1]\otimes[-5\sigma_2,5\sigma_2]$.

This test is commonly used to study the accuracy of numerical schemes for the Vlasov--Maxwell system. Following the approach described in \cite[Sec. 4.1]{MR3267101}, we derive analytical solutions to the Vlasov--Maxwell equations for specific cases. The procedure is as follows: starting with the initial data $f({\pmb{x}} , {\pmb{v}} , 0)$, $\pmb{E}({\pmb{x}} , 0)$, and $\pmb{B}({\pmb{x}} , 0)$, we solve the system to obtain the solutions $f({\pmb{x}} , {\pmb{v}} , T)$, $\pmb{E}({\pmb{x}} , T)$, and $\pmb{B}({\pmb{x}} , T)$ at $t = T$. Then, using $f({\pmb{x}} , -{\pmb{v}} , T)$, $\pmb{E}({\pmb{x}} , T)$, and $-\pmb{B}({\pmb{x}} , T)$ as new initial data, we solve the system again. The analytical solutions at $t = T$ should be $f({\pmb{x}} , -{\pmb{v}} , 0)$, $\pmb{E}({\pmb{x}} , 0)$, and $-\pmb{B}({\pmb{x}} , 0)$. Note that in this case, $E_2=0$, so the convergence orders concerning $E_2$ are not presented.

We solve the problem until $T=5$ for different mesh resolutions. In Table~\ref{tab:WIRV}, we present the $L^2$-norms of the errors and corresponding convergence rates for $f$, $E_1$, and $B_3$. We observe the optimal convergence rates of $\calO(h^{k+1})$ for the function $f$, which correspond to the theoretical rates for continuous finite element methods in smooth problems. Additionally, we observe that the convergence rate for $E_1$ and $B_3$ is $\calO(h^{k})$; this is because the polynomial degree for the function spaces of $E_1$ and $B_3$ is $k-1$. 
\begin{table}[htbp]
  \caption{Weibel instability: $L^2$-errors and convergence orders of viscous regularization solutions.}  \label{tab:WIRV}
\begin{center}
  \begin{tabular}{lllllllll} \hline
    \multicolumn{2}{l}{} &\multicolumn{5}{l}{\#DOFs} \\ \cline{3-9}
    \multicolumn{2}{l}{} &$31^3$ & &$61^3$  & & &$121^3$ & \\ \cline{3-3}\cline{5-6}\cline{8-9}
    \multicolumn{2}{l}{} &$L^2$-errors & &$L^2$-errors  &orders & &$L^2$-errors &orders \\ \hline
    \multirow{3}{*}{$k=1$} &$f$ &1.40E-2&&3.51E-03 &2.00 & &8.78E-04 &2.00\\
    &$E_1$ &6.09E-2&&3.03E-02 &1.01 & &1.51E-02 &1.00\\
    &$B_3$ &6.04E-2&&3.02E-02 &1.00 & &1.51E-02 &1.00\\ \hline
    \multirow{3}{*}{$k=2$} &$f$ &3.24E-3&&4.10E-04 &2.98 & &5.15E-05 &2.99\\
    &$E_1$ &6.52E-3&&1.63E-03 &2.00 & &4.09E-04 &2.01\\
    &$B_3$ &1.62E-2&&4.02E-03 &2.00 & &1.00E-03 &2.00\\ \hline
    \multirow{3}{*}{$k=3$} &$f$ &1.06E-3&&7.21E-05 &3.87 & &4.55E-06 &3.99\\
    &$E_1$ &7.78E-4&&9.75E-05 &3.00 & &1.22E-05 &2.99\\
    &$B_3$ &1.42E-3&&1.78E-04 &2.99 & &2.23E-05 &3.00\\ \hline
  \end{tabular}
\end{center}
\end{table}

In addition, we are interested in the behavior of the electromagnetic energy. We depict the time evolution of $\frac{1}{2}\Vert E_1\Vert^2$, $\frac{1}{2}\Vert E_2\Vert^2$, and $\frac{1}{2}\Vert B_3\Vert^2$ obtained using the viscous regularization method in Figure~\ref{fig:WI}. The green line in the plots shows the growth rate of 0.02784 which can be found from a linearized analysis (see \cite[Sec 4.1]{Kraus_Kormann_Morrison_Sonnendrücker_2017}). One can see, that this growth rate is confirmed in our results. 
\begin{figure}[htbp]
  \centering
  \begin{subfigure}[b]{0.5\textwidth}
    \includegraphics[width=\linewidth]{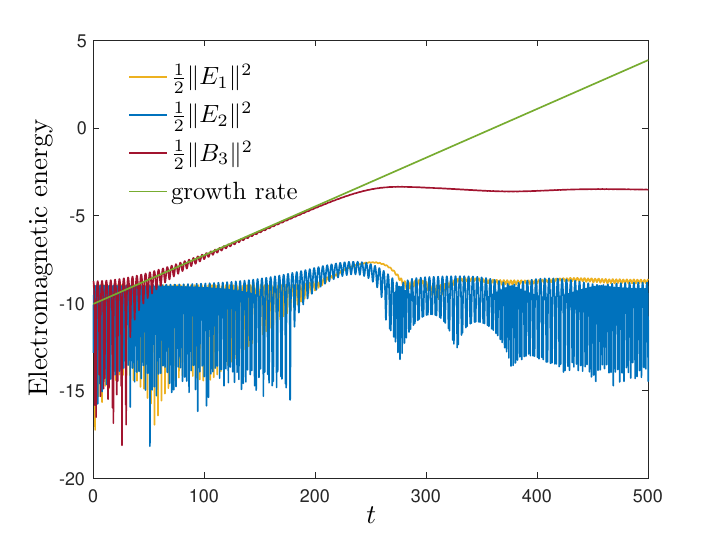}
    \caption{$\polQ_1$}
  \end{subfigure}\hfill
  \begin{subfigure}[b]{0.5\textwidth}
    \includegraphics[width=\linewidth]{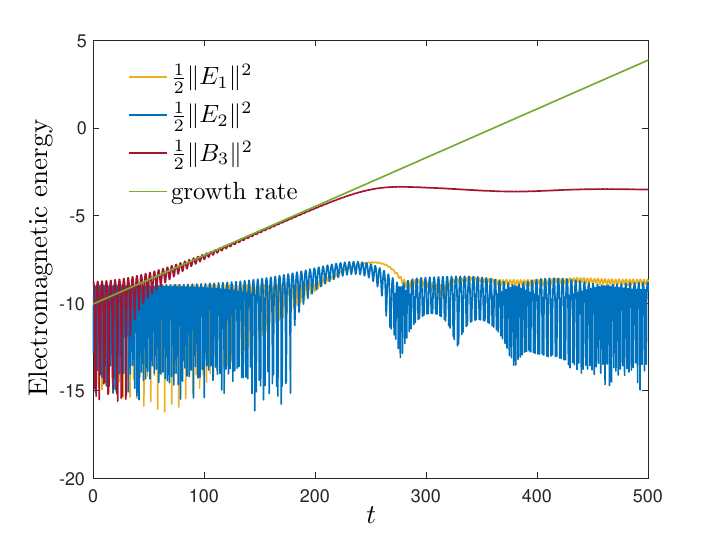}
    \caption{$\polQ_2$}
  \end{subfigure}\hfill
  \begin{subfigure}[b]{0.5\textwidth}
    \includegraphics[width=\linewidth]{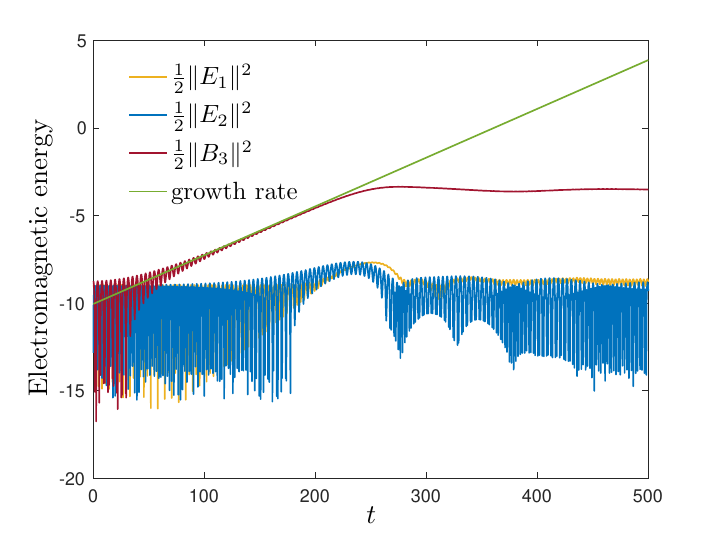}
    \caption{$\polQ_3$}
  \end{subfigure}\hfill
  \caption{Weibel instability: the two electric and the magnetic energy together with the analytic growth rate. The number of degrees of freedoms is $31\times61^2$.}
  \label{fig:WI}
\end{figure}
In the above analysis, we have proved that the proposed method preserves the total mass and Gauss' law. To verify these properties numerically, we calculate the maximum deviation from gauss' law and the relative error of the total mass. The results are collected in Table~\ref{tab:divWI}. It can be seen that the deviation of the total mass and electric Gauss' law is close to machine precision.
\begin{table}[htbp]
    \caption{Weibel instability: maximum deviation of total mass and Gauss' law until $t=500$. The number of degrees of freedom is $31\times61^2$.} 
    \label{tab:divWI}  
    \begin{center}
    \begin{tabular}{c|c|c} \hline
      &Total mass&Gauss' law\\ \hline
     $\polQ_1$&3.6E-13&1.3E-14\\
     $\polQ_2$&7.8E-14&8.7E-15\\
     $\polQ_3$&7.9E-14&8.3E-15\\ \hline
    \end{tabular}
  \end{center}
\end{table}

\subsection{Streaming Weibel instability}
We consider the streaming Weibel instability. We use the same initial distribution and fields as described in \cite{cheng2014discontinuous}:
\begin{equation}\notag
\begin{aligned}
     f_0(x,{\pmb{v}} ) =& \frac{1}{2\pi\sigma^2}{\rm exp}\left(-\frac{v_1^2}{2\sigma^2}\right)\left(\delta{\rm exp}\left(-\frac{(v_2-v_{0,1})^2}{2\sigma^2}\right)\right.\\
     &\left.+(1-\delta){\rm exp}\left(-\frac{(v_2-v_{0,2})^2}{2\sigma^2}\right)\right),\\
      B_{0,3}(x) = &\beta \sin(\theta x),\\
      E_{0,2}(x) = &0,
\end{aligned}
\end{equation}
and the value of $E_{0,1}(x)$ is computed from the Poisson equation. The following parameters are used: $\sigma = 0.1/\sqrt{2}$, $\theta = 0.2$, $\beta = 10^{-3}$, $v_{0,1}=0.5$, $v_{0,2}=-0.1$ and $\delta=1/6$. The \textcolor{black}{computational} phase space \textcolor{black}{domain} is $\Omega=[0,2\pi/\theta]\otimes[-5\sigma,5\sigma]\otimes[-1.2,1.2]$.

For this set of parameters, the growth rate of the electric energy $\calE_2$ is found to be $0.03$ in \cite{califano1997spatial}. Figure \ref{fig:SWI} shows that the results are consistent with this growth rate. As in the previous example, we calculate the relative errors of the total mass and the electric Gauss' law. The result, presented in table~\ref{tab:divSWI}, shows that mass conservation and Gauss' law are satisfied for this problem as well.
\begin{figure}[htbp]
  \centering
  \begin{subfigure}[b]{0.5\textwidth}
    \includegraphics[width=\linewidth]{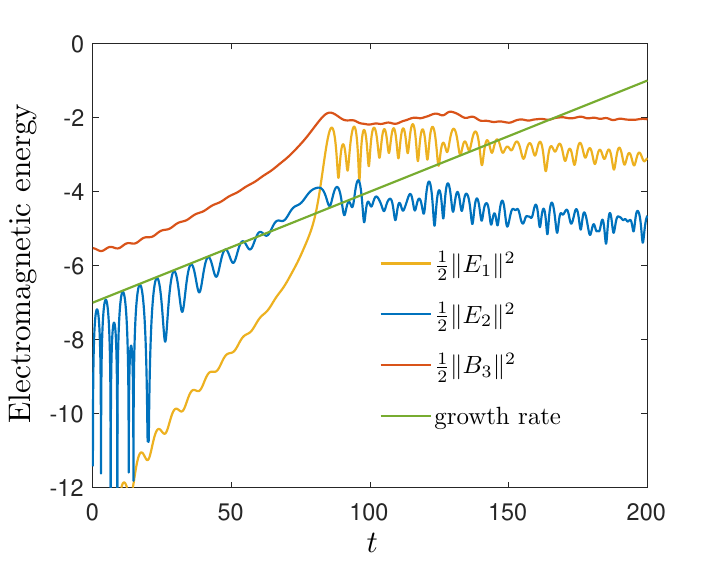}
    \caption{$\polQ_1$}
  \end{subfigure}\hfill
  \begin{subfigure}[b]{0.5\textwidth}
    \includegraphics[width=\linewidth]{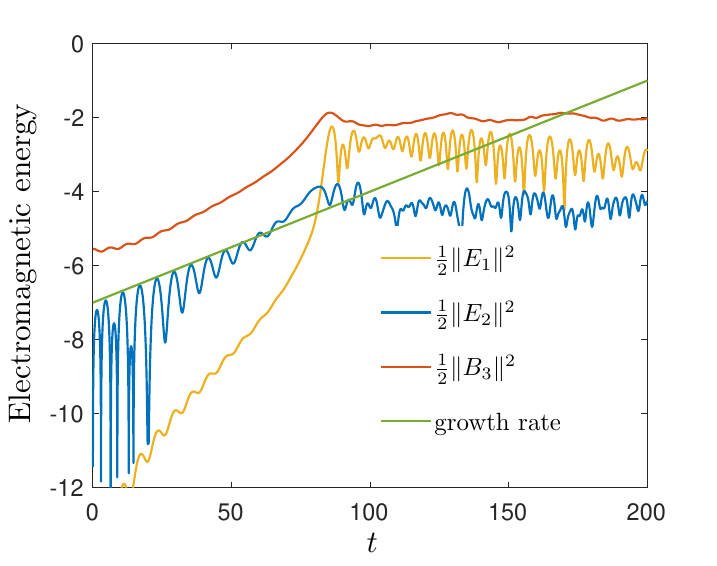}
    \caption{$\polQ_2$}
  \end{subfigure}\hfill
  \begin{subfigure}[b]{0.5\textwidth}
    \includegraphics[width=\linewidth]{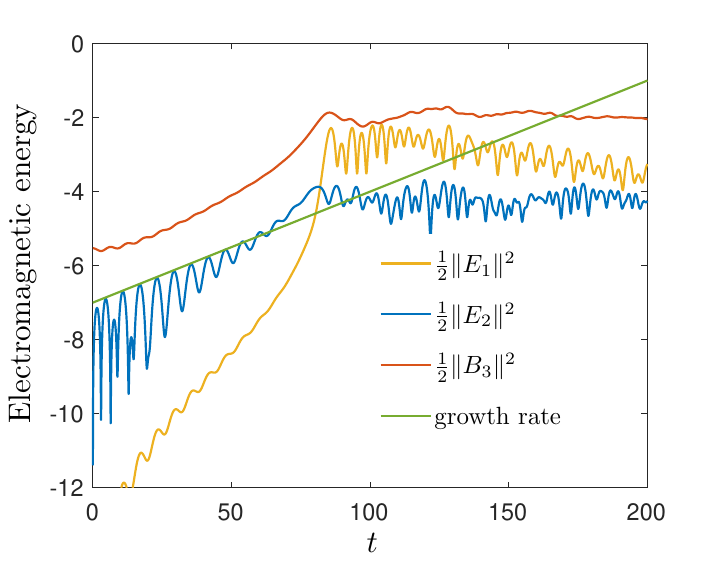}
    \caption{$\polQ_3$}
  \end{subfigure}\hfill
  \caption{Streaming Weibel instability: the two electric and the magnetic energy together with the analytic growth rate. The number of degrees of freedoms is $31\times61^2$.}
  \label{fig:SWI}
\end{figure}
In Figure~\ref{fig:SWIsamedof}, we plot the deviations of the total energy and the squared $L^2$-norm for different polynomial spaces using the same number of degrees of freedom, and very similar results are obtained for all polynomial spaces. In Figure~\ref{fig:SWIsameele}, we fix the polynomial degree, in this case, we use $\polQ_1$ and compute the deviations of the total energy and squared $L^2$-norm. We observe that the deviations decrease as the mesh is refined.
\begin{table}[htbp] 
  \caption{Streaming Weibel instability: maximum deviation of total mass and Gauss' law until $t=200$. The number of degrees of freedom is $31\times61^2$.}
  \label{tab:divSWI}  
  \begin{center}
  \begin{tabular}{c|c|c} \hline
    &Total mass&Gauss' law\\ \hline
   $\polQ_1$&9.3E-13&2.7E-13\\
   $\polQ_2$&1.8E-13&2.7E-14\\
   $\polQ_3$&1.7E-13&1.0E-13\\ \hline
  \end{tabular}
\end{center}
\end{table}

\begin{figure}[htbp]
  \centering
  \begin{subfigure}[b]{0.5\textwidth}
    \includegraphics[width=\linewidth]{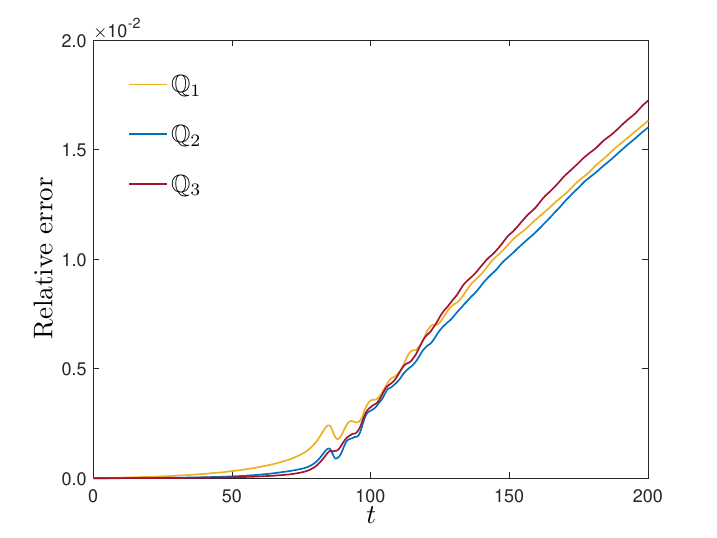}
    \caption{Total energy}
  \end{subfigure}\hfill
  \begin{subfigure}[b]{0.5\textwidth}
    \includegraphics[width=\linewidth]{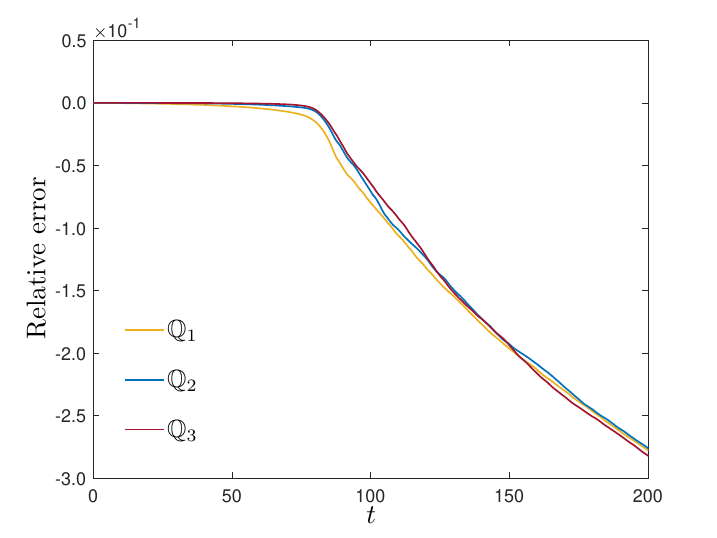}
    \caption{Squared $L^2$-norm}
  \end{subfigure}\hfill
  \caption{Streaming Weibel instability: the deviations of total energy and squared $L^2$ norm when the solutions are obtained using $\polQ_1$, $\polQ_2$, and $\polQ_3$ elements. The number of degrees of freedoms is $31\times61^2$.}
  \label{fig:SWIsamedof}  
\end{figure}
\begin{figure}[htbp]
  \centering
  \begin{subfigure}[b]{0.5\textwidth}
    \includegraphics[width=\linewidth]{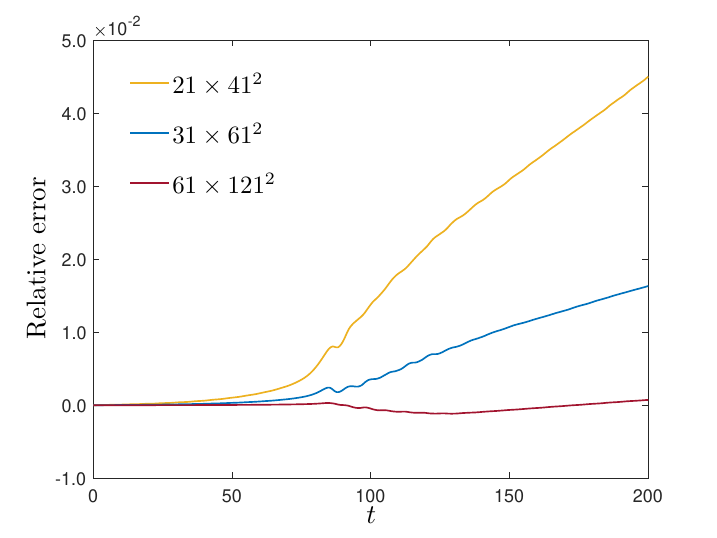}
    \caption{Total energy}
  \end{subfigure}\hfill
  \begin{subfigure}[b]{0.5\textwidth}
    \includegraphics[width=\linewidth]{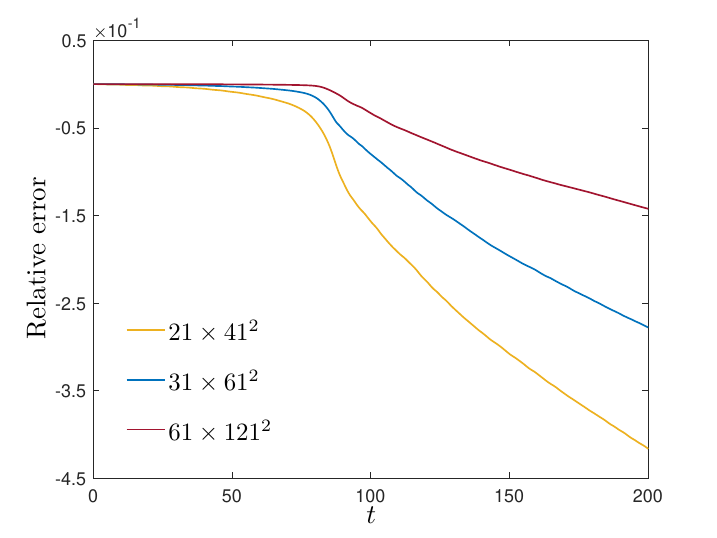}
    \caption{Squared $L^2$-norm}
  \end{subfigure}\hfill
  \caption{Streaming Weibel instability: the deviations of total energy and squared $L^2$-norm when the solutions are obtained in different meshes. The element is $\polQ_1$.}
  \label{fig:SWIsameele}  
\end{figure}

\subsection{Two-stream instability} 
Now we investigate is the two-stream instability, which is given using the following initial value
\begin{equation}\notag
\begin{aligned}
     f_0(x,{\pmb{v}} )=&\frac{1}{4\pi}\left({\rm exp}\left(-\frac{(v_1-2.4)^2}{2}\right)+{\rm exp}\left(-\frac{(v_1+2.4)^2}{2}\right)\right)\\
    &{\rm exp}\left(-\frac{v_2^2}{2}\right)(1+\alpha {\rm cos}(\theta x)),\\
     B_{0,3}(x) &= 0,\\
     E_{0,2}(x) &= 0,
\end{aligned}
\end{equation}
where $\alpha = 10^{-3}$, $\theta=0.2$, and $E_{0,1}(x)$ is computed from the Poisson equation \eqref{eq:poisson}. The \textcolor{black}{computational} phase space \textcolor{black}{domain} is $\Omega=[0,2\pi/\theta]\otimes[-7.5,7.5]\otimes[-5.0,5.0]$.

We plot the time evolution of the electric energy in Figure~\ref{fig:TS}. The left panel of the figure presents the solution at an earlier time $T=50$ for three different polynomial spaces using the same degrees of freedom. Our findings align well with those reported in \cite[Fig 2]{MR4402737}, where comparable results were achieved using a Fourier-based approach. We are also interested in seeing the long-term behavior of the two-stream instability. We plot the solutions until $T=200$ in the right panel of Figure~\ref{fig:TS}. We observe that the electric energy remains constant after a short period of oscillation, and there is no noticeable loss or gain of energy. 
\begin{figure}[htbp]
  \centering
  \begin{subfigure}[b]{0.5\textwidth}
    \includegraphics[width=\linewidth]{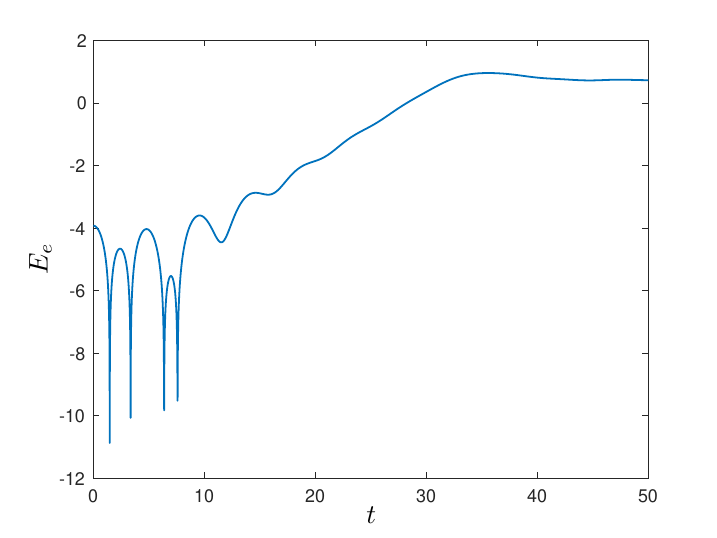}
    \caption{$\polQ_1, T=50$}
  \end{subfigure}\hfill
  \centering
  \begin{subfigure}[b]{0.5\textwidth}
    \includegraphics[width=\linewidth]{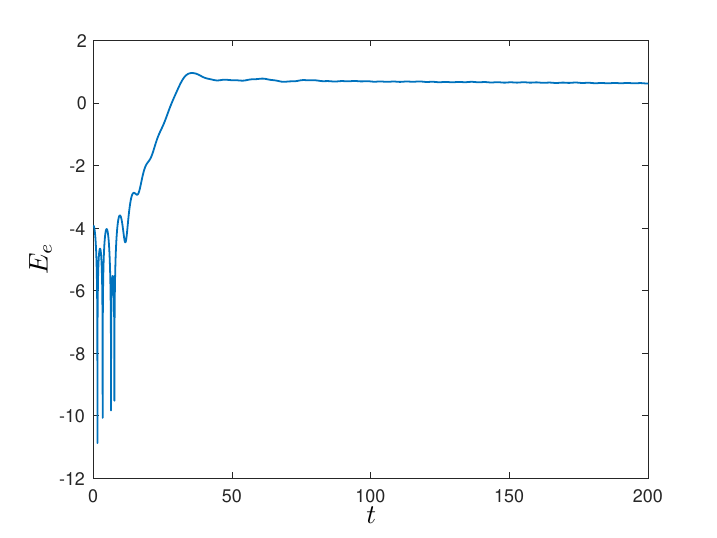}
    \caption{$\polQ_1, T=200$}
  \end{subfigure}\hfill
  \centering
  \begin{subfigure}[b]{0.5\textwidth}
    \includegraphics[width=\linewidth]{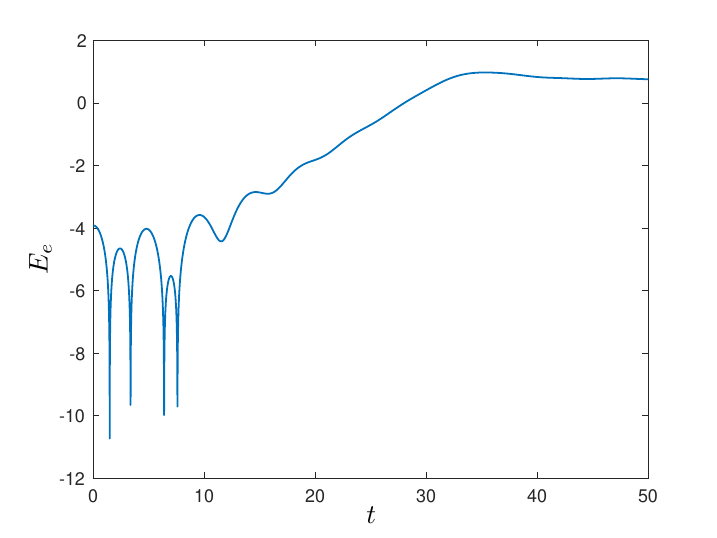}
    \caption{$\polQ_2, T=50$}
  \end{subfigure}\hfill
  \centering
  \begin{subfigure}[b]{0.5\textwidth}
    \includegraphics[width=\linewidth]{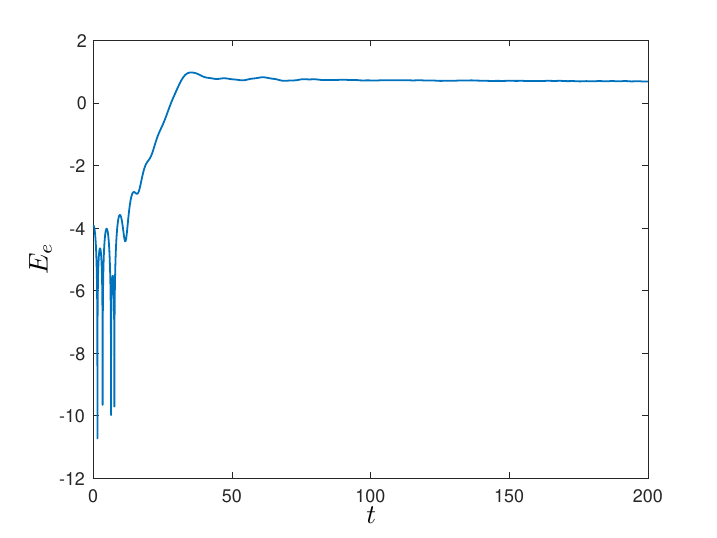}
    \caption{$\polQ_2, T=200$}
  \end{subfigure}\hfill
  \centering
  \begin{subfigure}[b]{0.5\textwidth}
    \includegraphics[width=\linewidth]{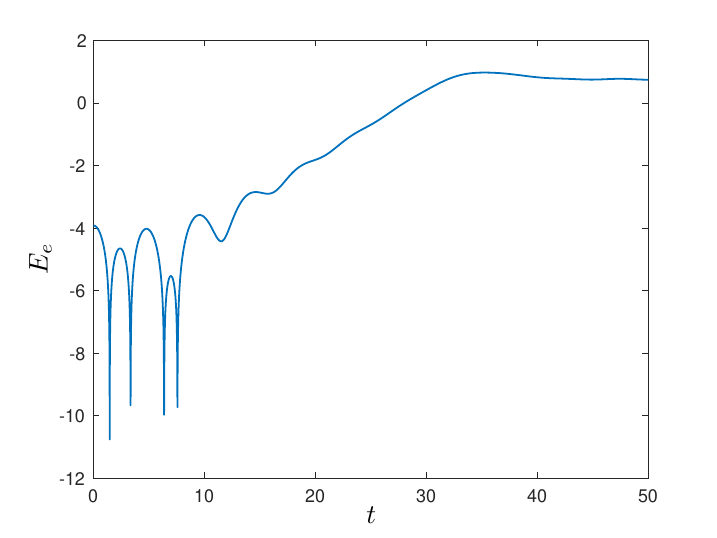}
    \caption{$\polQ_3, T=50$}
  \end{subfigure}\hfill
  \centering
  \begin{subfigure}[b]{0.5\textwidth}
    \includegraphics[width=\linewidth]{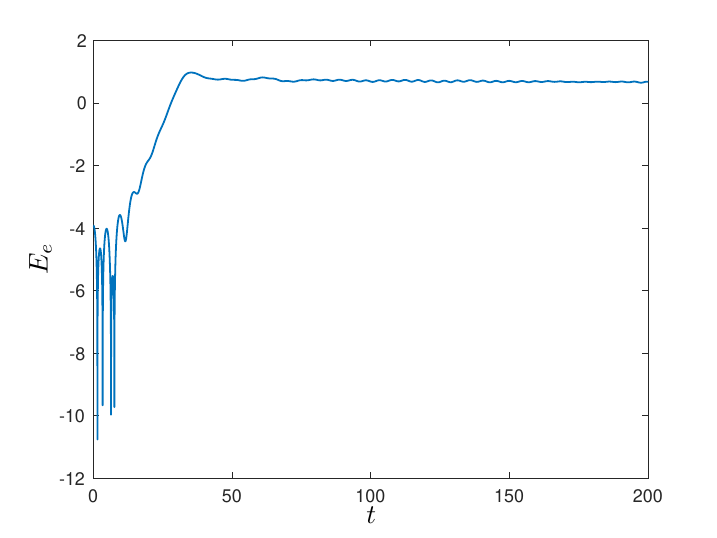}
    \caption{$\polQ_3, T=200$}
  \end{subfigure}\hfill
  \caption{Two-stream instability: time evolution of the ﬁrst component of the electric energy. The results are obtained in the meshes consist of $31\times61^2$ degrees of freedom.}
  \label{fig:TS}
\end{figure}

\subsection{Diocotron instability}
Finally, we apply our methods to a more complex geometric setting. The $\polP_k$ element is another commonly used element in finite element methods. One key advantage of $\polP_k$ element is that it employs simplices as reference cells, enabling it to conform naturally to arbitrary geometries and complex boundaries, while $\polQ_k$ elements are typically limited to structured grid meshes. When necessary—for instance, when the physical space has an unstructured shape—we can employ $\polP_k$ elements. As mentioned earlier, our methods utilize tensor products, with the meshes and function spaces for $\Omega_{\pmb{x}}$ and $\Omega_{\pmb{v}}$ defined independently. This flexibility allows us to use a combination of $\polP_k \otimes \polQ_k$ elements.

We consider the diocotron instability in the system of 2D2V Vlasov--Maxwell. This benchmark case has been discussed in many reference, see \cite{MR3267094,MR3485969,MR4456467} for instance. The initial distribution function is given by
\begin{equation}
  f_0({\pmb{x}},{\pmb{v}})= \frac{\rho_0({\pmb{x}})}{2\pi}\exp\left(-\frac{v_1^2+v_2^2}{2}\right),\notag
\end{equation}
where 
\begin{equation}
  \rho_0({\pmb{x}})= \left\{
    \begin{aligned}
      &(1+\alpha\cos(\ell\theta))\exp(-4(\Vert{\pmb{x}}\Vert-6.5)^2),\quad&&{\rm if}\ 5\leq \Vert{\pmb{x}}\Vert\leq 8, \\
      &0, &&{\rm otherwise,}
    \end{aligned}
  \right.\notag
\end{equation}
and $\alpha = 0.2$, $\theta = {\rm atan}(x_2/x_1)$ and $\ell=6$ is the number of vortices. The initial electric field $\pmb{E}=(E_1,E_2,0)^{\mathsf{T}}$ is obtained by solving the Poisson equation \eqref{eq:poisson} and we have a initial magnetic field whose value is $\pmb{B}=(0,0,1)^{\mathsf{T}}$. 

In $\Omega_{\pmb{x}}$ we use the $\polP_1$ element with 10000 degrees of freedom, and apply 0 Dirichlet boundary condition. Since $\calT_\bx$ is no longer a structured grid mesh, we define its mesh size as a function $h(\bx)\in\polV_\bx$ using the following projection: find $h(\bx)\in\polV_\bx$ such that
\begin{equation}
  \sum_{K\in\calT_\bx}\int_K h(\bx)\phi_i\ {\rm d}\bx=\sum_{K\in\calT_\bx}\int_K \frac{r}{k}\phi_i\ {\rm d}\bx,\qquad\forall i\in\calI_\bx,\notag
\end{equation}
where $r$ is the circumradius of the element $K$. In $\Omega_{\pmb{v}}$ we use the $\polQ_1$ element with $20^2$ degrees of freedom and periodic boundary condition. The  charge density $\rho_h$ at $t=0$ and $t=1$ are shown in Figure \ref{fig:DI1}, where we observe the spreading of the density over time. In Figure \ref{fig:DI2}, we plot the distributions of the residual and artificial viscosity in the ${\pmb{x}}$-direction at $t=1$. It can be seen that the residual follows the propagation of $\rho_h$, and the presence of a six-petal shape reflects the vortex number of the initial density. In addition, the artificial viscosity has the same distribution as the residual, which means the residual-based viscosity is most often activated in place of the first-order viscosity.
\begin{figure}[htbp]
  \centering
  \begin{subfigure}[b]{0.5\textwidth}
    \includegraphics[width=\linewidth]{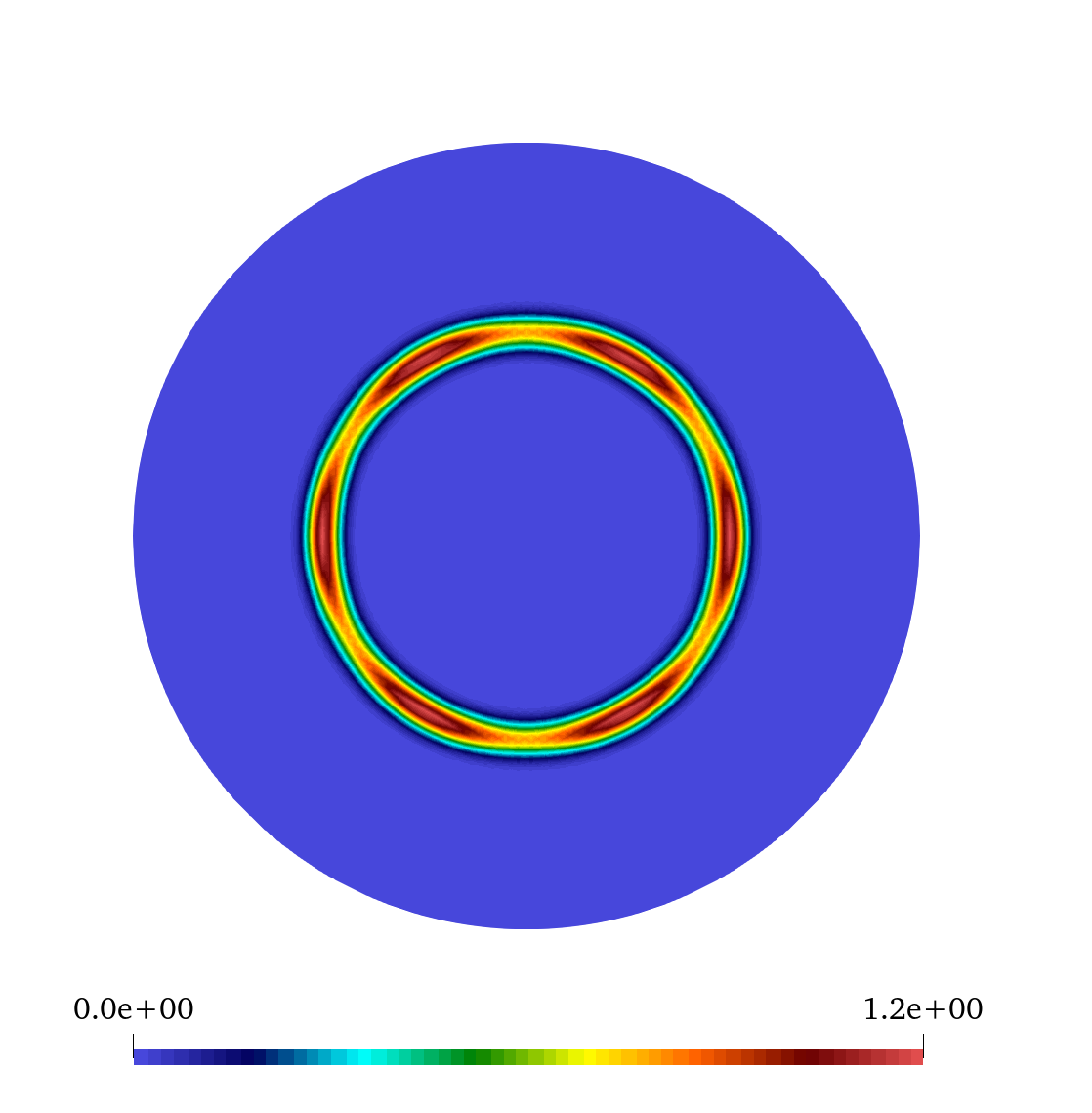}
    \caption{$t=0$}
  \end{subfigure}\hfill
  \centering
  \begin{subfigure}[b]{0.5\textwidth}
    \includegraphics[width=\linewidth]{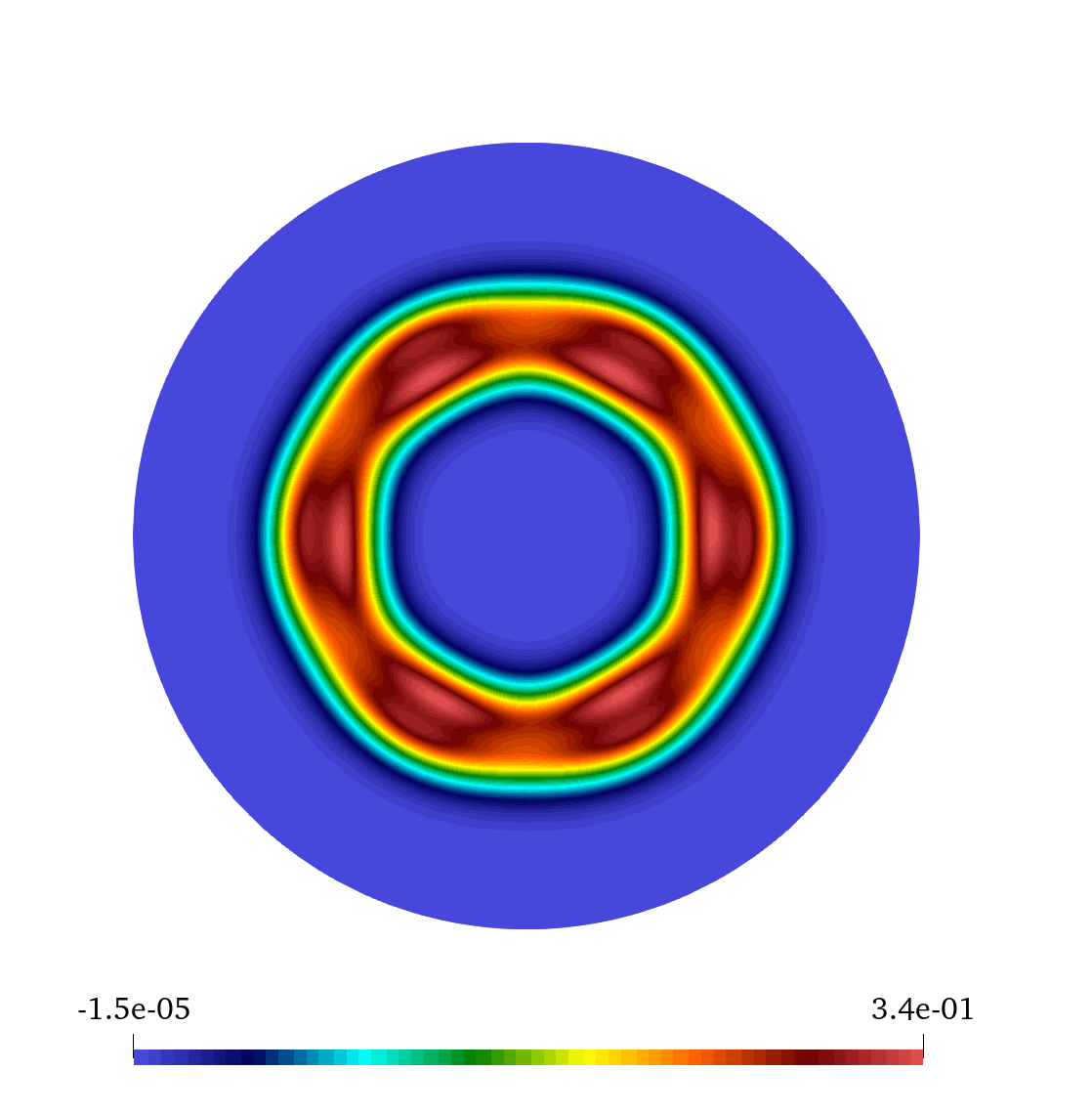}
    \caption{$t=1$}
  \end{subfigure}\hfill
  \caption{Diocotron instability: time evolution of the charge density. The solutions are obtained in such mesh: $\Omega_{\pmb{x}}$ is discretized using $\polP_1$ element with $10000$ DOFs in total, and $\Omega_{\pmb{v}}$ is discretized using $\polQ_1$ element with $20\times20$ DOFs.}
  \label{fig:DI1}
\end{figure}

\begin{figure}[htbp]
  \centering
  \begin{subfigure}[b]{0.5\textwidth}
    \includegraphics[width=\linewidth]{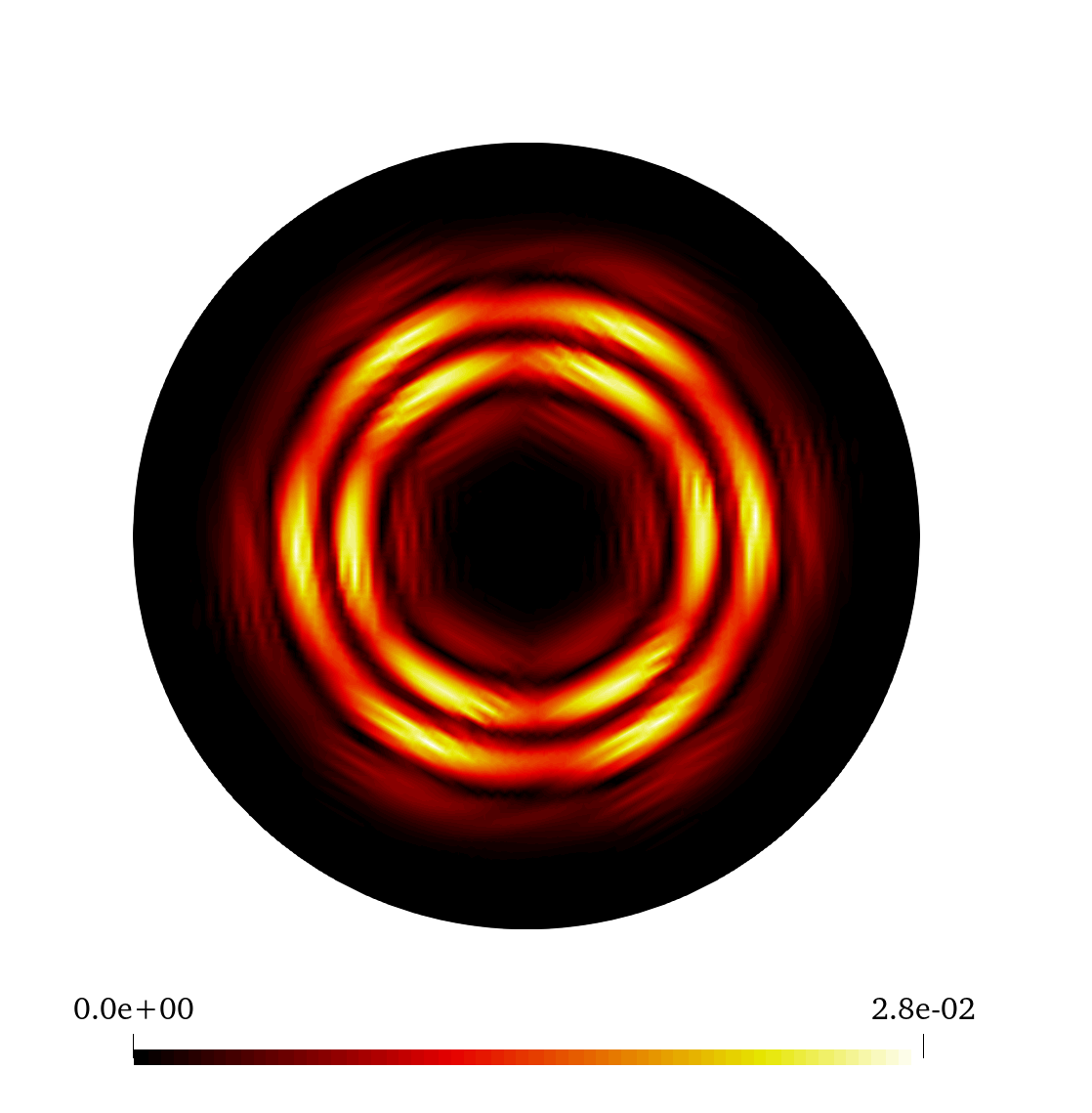}
    \caption{$R_x$}
  \end{subfigure}\hfill
  \centering
  \begin{subfigure}[b]{0.5\textwidth}
    \includegraphics[width=\linewidth]{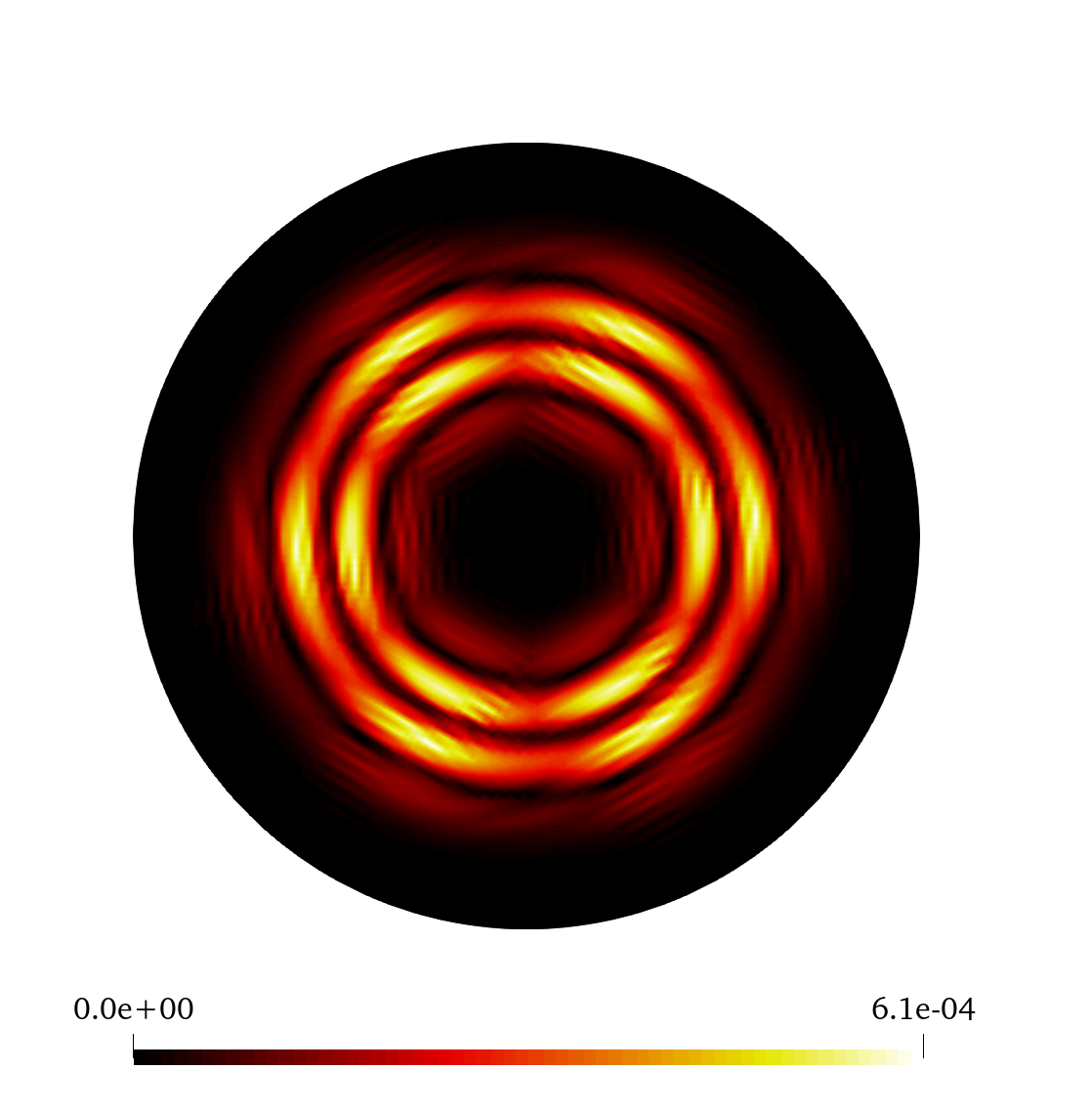}
    \caption{$\mathsf{A}_\bx$}
  \end{subfigure}\hfill
  \caption{Diocotron instability: the residual $R_x$ and the artificial viscosity $\mathsf{A}_{\pmb{x}}$ at $t=1$.}
  \label{fig:DI2}
\end{figure}

\section{Conclusion and outlook}
In this paper, we proposed and numerically solved a viscous regularization of the Vlasov--Maxwell system. The viscous regularization enhances the finite element approximation for convection-dominated problems, such as the Vlasov equation. We then presented a high-order finite element method to solve this system. The method employs high-order polynomials on general cuboids in $\polR^{d_{\pmb{x}}}$ and $\polR^{d_{\pmb{v}}}$. The Vlasov equation is discretized using Lagrange polynomial spaces, while the Maxwell system is approximated using curl-and divergence-conforming finite element spaces. Specifically, we use Nedelec and Raviart--Thomas spaces. The method utilizes tensor-product constructions, enabling the application to high-dimensional problems, i.e., when $d_{\pmb{x}} + d_{\pmb{v}} > 3$. An additional advantage of using tensor products is the increased flexibility in the choice of finite elements. The numerical experiments were conducted for the reduced Vlasov--Maxwell system. The results demonstrate that the proposed method is robust, highly accurate, mass-conservative, and preserves Gauss' law. As a future direction, we aim to enhance the viscous regularization term to ensure that the method is provably positivity-preserving.

\section*{Acknowledgements}
The second and third authors are financially supported by the Swedish Research Council (VR) under grant numbers 2021-04620 and 2021-05095, which is gratefully acknowledged.  Some computations were performed on UPPMAX provided by the National Academic Infrastructure for Super­computing in Sweden (NAISS) under project NAISS 2023/23-12. Open access funding is provided by Uppsala University. The first author is supported by the German Science Foundation DFG within the Collaborative Research Center SFB1491.

\bibliographystyle{abbrvnat} 
\bibliography{ref}

\appendix

\end{document}